\newtheorem{theorem}{Theorem}
\newtheorem{corollary}{Corollary}
\newtheorem{lemma}{Lemma}
\theoremstyle{definition}
\newtheorem{definition}{Definition}
\begin{document}

\title[]{Function Model of the Teichm\"uller space of a closed hyperbolic Riemann Surface}

\author{Yunping Jiang}


\address{Department of Mathematics\\
Queens College of the City University of New York\\
Flushing, NY 11367-1597\\
and\\
Department of Mathematics\\
Graduate School of the City University of New York\\
365 Fifth Avenue, New York, NY 10016}
\email[]{yunping.jiang@qc.cuny.edu}

\subjclass[2000]{Primary 37F99, Secondary 32H02}

\keywords{}


\begin{abstract}
We introduce a function model for the Teichm\"uller space of a
closed hyperbolic Riemann surface. Then we introduce a new metric
by using the maximum norm on the function space on the
Teichm\"uller space. We prove that the identity map from
the Teichm\"uller space equipped with the usual Teichm\"uller metric
to the Teichm\"uller space equipped with this new metric is uniformly continuous.
Furthermore, we also prove that the inverse of the identity, that is, the identity map from
the Teichm\"uller space equipped with this new metric to the Teichm\"uller space
equipped with the usual Teichm\"uller metric, is continuous.
Therefore, the topology induced by the new metric
is just the same as the topology induced by the usual Teichm\"uller metric
on the Teichm\"uller space. We give a remark about the pressure metric and the Weil-Petersson metric.
\end{abstract}

\maketitle

\section{\bf Introduction}
A closed Riemann surface is a compact connected complex
one-dimensional surface. We only consider an oriented surface. A
topological characterization of a closed Riemann surface is its
genus $g$. Riemann observed that all genus $g=0$ closed Riemann
surfaces are conformally equivalent to the standard Riemann sphere
${\mathbb P}^{1}$. However, this is not in general true for closed
Riemann surfaces of positive genus. Suppose $R$ is a closed
Riemann surface of genus $g\geq 2$. Then it is hyperbolic and
conformally equivalent to the open unit disk modulo a Fuchsian
group. A marked Riemann surface by $R$ is a pair $(X, h)$ where
$h: R\to X$ is an orientation-preserving homeomorphism. In the
space of all marked Riemann surfaces $(X,h)$ by $R$, one can
introduces a conformal equivalence relation. This space modulo
this equivalence relation is called the Techm\"uller space $T(R)$.
In other words, $T(R)$ is the quotient space of all complex
structures on $R$ by those orientation-preserving diffeomorphisms
which are isotopic to the identity. We know that
\begin{itemize}
\item[a)] $T(R)$ is homeomorphic to ${\mathbb R}^{6g-6}$,
\item[b)] $T(R)$ admits a complex manifold structure of $3g-3$,
\item[c)] $T(R)$ can be embedded into ${\mathbb C}^{3g-3}$ as a
contractible set, and
\item[d)] $T(R)$ is a pseudoconvex domain.
\end{itemize}

The Teichm\"uller space $T(R)$ is an important subject in the
modern mathematics and physics. It is a cover of the moduli space
which is the space of all complex structures on $R$ modulo the
action of orientation-preserving diffeomorphisms. The moduli space
has the same dimension $6g-6$. The moduli space was first
considered by Riemann and plays an important role in the modern
string theory.

To understand the Teichm\"uller space $T(R)$, several models have
been introduced. For examples, we have Bers' embedding and
Thurston's embedding. Furthermore, several metrics have been
introduced on the Teichm\"uller space $T(R)$. The first metric
$d_{T}(\cdot, \cdot)$ is introduced by Teichm\"uller. There are
other metrics, for examples, the Kobayashi metric and the
Weil-Petersson metric. Royden proved that the Teichm\"uller metric
is equal to the Kobayashi metric in this case and Gardiner even
generalized this result for any Riemann surfaces of infinite
analytic type (refer to~\cite{GardinerJiangWang} for a proof and
furthermore references).

In this paper, we introduce a new model of the Teichm\"uller space
from the dynamical system point of views. This new model is a
space of functions defined on a Cantor set $\Sigma^{*}_{A}$. The
graphs of these functions are in the infinite-dimenisonal cube
$\prod_{0}^{\infty} (0,1)$ of ${\mathbb R}^{\infty}$. Therefore,
we have the maximum norm on the function space. This maximum norm
introduces a maximum metric $d_{max} (\cdot, \cdot)$ on the
Teichm\"uller space $T(R)$. We prove that the identity map from
the Teichm\"uller space equipped with the usual Teichm\"uller metric
to the Teichm\"uller space equipped with this new metric is uniformly continuous (see
Theorem~\ref{finermetric}). Furthermore, we also prove that the identity map from
the Teichm\"uller space equipped with this new metric to the Teichm\"uller space
equipped with the Teichm\"uller metric is continuous
(see Theorem~\ref{semifinermetric}). Therefore, the topology induced by the new metric
is just the same as the topology induced by the usual Teichm\"uller metric on the Teichm\"uller space.

The paper is organized as follows. In \S2, we define and review
the Teichm\"uller space $T(R)$ of a closed hyperbolic Riemann
surface $R$ and mention a theorem due to Earle and McMullen which
we will used in this paper (see Theorem~\ref{EarleMcMullen}). In
\S3, we use the Nielsen development for Fuchsian groups to
construct expanding transitive Markov maps for any marked Riemann
surfaces by a standard closed hyperbolic Riemann surface. We use
Bowen's paper~\cite{Bowen} and Bowen and Series'
paper~\cite{BowenSeries} as two references. In \S4, we define the
symbolic space $\Sigma_{A}$ and the dual symbolic space
$\Sigma_{A}^{*}$ for all marked Riemann surfaces by a fixed
standard closed hyperbolic Riemann surface. The symbolic space
$\Sigma_{A}$ is treated as the topological model of all such
marked Riemann surfaces. We define geometric models on the dual
symbolic space $\Sigma_{A}^{*}$ for all marked Riemann surfaces by
a fixed standard closed hyperbolic Riemann surface in \S4.2. The
geometric models are Lipschitz continuous functions defined on
$\Sigma_{A}^{*}$. Their graphs are contained in the
infinite-dimensional cube $\prod_{0}^{\infty}(0,1)$. To prove
these functions are geometric models, we mention Tukia's theorem
which is a stronger version than Mostow's rigidity theorem in
$2$-dimensional case. For the sake of completeness of the paper,
we give a proof of Tukia's theorem from the dynamical system point
of views. We call each geometric model a scaling function. We use
${\mathcal F}$ to denote the space of all scaling functions. In
\S5, we prove that there is a one-to-one and onto maps between the
Teichm\"uller space and the function space ${\mathcal F}$. In \S6,
we discuss Bers' embedding and the complex manifold structure on
${\mathcal F}$. Using the maximum norm on the function space
${\mathcal F}$, we define the maximum metric
$d_{max}(\cdot,\cdot)$ on the Teichm\"uller space in \S7.
We prove, in the same section, Theorem~\ref{finermetric}, that is,
the identity map from
the Teichm\"uller space equipped with the usual Teichm\"uller metric
to the Teichm\"uller space equipped with this new metric is uniformly continuous.
Furthermore, we prove, in the same section,
Theorem~\ref{semifinermetric}, that is, that the identity map from
the Teichm\"uller space equipped with this new metric to the Teichm\"uller space
equipped with the Teichm\"uller metric is continuous.
Finally, in \S8, we give a remark to compare our function model and McMullen's thermodynamical embedding in his recent paper~\cite{McMullen}.
Furthermore, by following McMullen's calculation of the Weil-Petersson metric on the tangent space
of the Teichm\"uller space by the pressure metric, we show that the pressure metric of the tangent vector
to any smooth curve at a point in our function model is a constant times the Weil-Petersson metric.

\vspace{20pt} \noindent {\bf Acknowledgement:}
This work is partially done when I visited the Institut des Hautes \'Etudes
Scientifiques in Bures-sur-Yvette, France and when I visited the
Academy of Mathematics and System Science and the Morningside
Center of Mathematics at the Chinese Academy of Sciences in
Beijing, China. I would like to thank these institutions for their
hospitality. This work is partially supported by NSF grants,
PSC-CUNY awards, and the Bai Ren Ji Hua of Chinese Academy of
Sciences. I would like to thank Professors Fred Gardiner and Bill Harvey
for many interesting conversations. I would like to thank Professor Curt McuMullen for his
useful comments and suggestions for the first version of this paper
and for pointing to me his recent paper~\cite{McMullen} which leads to \S8.

\section{\bf Teichm\"uller Space of a closed hyperbolic Riemann surface}

We first discuss the Teichm\"uller space of a closed hyperbolic
Riemann surface $R$. Suppose ${\mathbb D}$ is the unit disk and
suppose $S^{1}=\partial {\mathbb D}$ is the unit circle. Let $R=
{\mathbb D}/\Gamma$ be a closed hyperbolic Riemann surface,
presented as the quotient of the unit disk by a Fuchsian group
whose limit set is the whole $S^{1}$. Any quasiconformal map $h:
R\to R$ can be lift to a quasiconformal map $H: {\mathbb D}\to
{\mathbb D}$. The map $H$ can be extended to a homeomorphism of
$\overline{\mathbb D}={\mathbb D}\cup S^{1}$. The key in defining
the Teichm\"uller space is to know which quasiconformal maps $h :
R\to R$ are to be considered trivial. The following theorem gives
an answer to this question.

\vspace*{10pt}
\begin{theorem}[Earle-McMullen~\cite{EarleMcMullen}]~\label{EarleMcMullen}
Suppose $h : R\to R$ is a quasiconformal map. The following are
equivalent.
\begin{itemize}
\item[1)] There is a lift of $h$ to a map $H: {\mathbb D}\to {\mathbb D}$ that extends to the identity
on $S^{1}$.
\item[2)] The map $h$ is homotopic to the identity rel ideal boundary (in this case the ideal boundary is empty).
\item[3)] The map $h$ is isotopic to the identity rel ideal boundary, through uniformly quasiconformal maps.
\end{itemize}
\end{theorem}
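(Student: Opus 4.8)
The plan is to prove the cycle $(3)\Rightarrow(2)\Rightarrow(1)\Rightarrow(3)$. The implication $(3)\Rightarrow(2)$ is immediate: a uniformly quasiconformal isotopy is in particular a homotopy through homeomorphisms, and since $R$ is closed its ideal boundary is empty, so ``rel ideal boundary'' imposes no condition.

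For $(2)\Rightarrow(1)$ I would first replace the given homotopy from $\mathrm{id}$ to $h$ by one whose tracks $s\mapsto h_{s}(x)$ are piecewise geodesic of length bounded by some $L<\infty$ uniformly in $x$; this is possible by subdividing $[0,1]$ so that consecutive maps are uniformly close (uniform continuity on the compact $R\times[0,1]$) and joining the intermediate values by short geodesics. Lifting this homotopy through the universal covering $p\colon{\mathbb D}\to R$, starting from the lift $\mathrm{id}$ of $h_{0}$, and using that $p$ is a local isometry for the hyperbolic metrics (so lifted tracks have the same length as the tracks below), the terminal lift $H$ of $h$ satisfies $\sup_{z}d_{\mathbb D}(z,H(z))\le L$. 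A self-map of ${\mathbb D}$ at bounded hyperbolic distance from the identity extends continuously to $\overline{\mathbb D}$ by the identity on $S^{1}$, because a hyperbolic ball of fixed radius about $z$ is a Euclidean disk whose Euclidean diameter tends to $0$ and which accumulates on the same boundary point as $z$ when $z\to S^{1}$. This $H$ is the lift required in (1).

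The substantive implication, and the one I expect to be the main obstacle, is $(1)\Rightarrow(3)$. First note that the lift $H$ in (1) commutes with $\Gamma$: for $\gamma\in\Gamma$ the map $H\gamma H^{-1}$ lies in $\Gamma$ and agrees with $\gamma$ on $S^{1}$, hence equals $\gamma$. So $H$ is $\Gamma$-equivariant and it suffices to build a uniformly quasiconformal isotopy of ${\mathbb D}$ from $\mathrm{id}$ to $H$ through $\Gamma$-equivariant maps that are the identity on $S^{1}$; such an isotopy descends to $R$. Since $H$ is quasiconformal and fixes $S^{1}$, it is a quasi-isometry of the hyperbolic disk lying at bounded distance from the identity, and this is the regularity that must be kept under control along the deformation. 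One route is via Beltrami coefficients: the $\Gamma$-invariant complex dilatation $\mu$ of $H$ should be joined to $0$ by a path $t\mapsto\mu_{t}$ of $\Gamma$-invariant Beltrami coefficients along which the normalized solution $w^{\mu_{t}}$ still restricts to the identity on $S^{1}$; then, since $\mu_{t}$ is $\Gamma$-invariant and $w^{\mu_{t}}$ fixes $S^{1}$, the map $w^{\mu_{t}}$ commutes with $\Gamma$, the image of the path is compact in the open unit ball so $\sup_{t}\|\mu_{t}\|_{\infty}<1$, and by the measurable Riemann mapping theorem with parameters $\{w^{\mu_{t}}\}$ is a continuous, uniformly quasiconformal, equivariant family descending to the desired isotopy.

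The delicate point is exactly the existence of such an interpolating family with a single bound on the dilatation, kept equivariant and fixed on $S^{1}$ throughout. The naive hyperbolic straight-line homotopy ($H_{t}(z)$ = the point at parameter $t$ on the geodesic from $z$ to $H(z)$) is equivariant, fixes $S^{1}$, and runs from $\mathrm{id}$ to $H$, but need not consist of homeomorphisms; and the scaled-Beltrami path $t\mu$, though convex and therefore available, leaves the fiber of interest, its solutions conjugating $\Gamma$ to nearby but distinct Fuchsian groups. Both difficulties are resolved by the conformally natural (Douady--Earle barycentric) extension and its behaviour under isotopies --- the technical heart of Earle and McMullen's argument --- namely that the barycentric extensions of a continuous family of $\Gamma$-equivariant quasisymmetric circle homeomorphisms form an equivariant, uniformly quasiconformal isotopy of the disk; together with the connectedness of the fibers of the Bers projection, which supplies the path in the first place, this completes the argument.
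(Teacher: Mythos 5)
You should first note that the paper offers no proof of this statement at all: it is quoted verbatim as a theorem of Earle and McMullen~\cite{EarleMcMullen} and used as a black box, so there is no internal argument to compare yours with; your attempt has to stand on its own. On that basis, the cycle $(3)\Rightarrow(2)\Rightarrow(1)\Rightarrow(3)$ is the right structure and the first two legs are essentially correct. (For $(2)\Rightarrow(1)$ the piecewise-geodesic modification is unnecessary: lift the homotopy with $H_{0}=\mathrm{id}$; uniqueness of lifts gives $H_{s}\circ\gamma=\gamma\circ H_{s}$ for all $\gamma\in\Gamma$, so $z\mapsto d_{\mathbb D}(z,H_{1}(z))$ is continuous and $\Gamma$-invariant, hence bounded because $R$ is compact, and your boundary-extension argument then applies as stated.)

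The genuine gap is $(1)\Rightarrow(3)$, which is the actual content of the theorem: you correctly diagnose why the geodesic interpolation and the scaled path $t\mu$ fail, but then you only name the Douady--Earle extension and ``connectedness of the fibers of the Bers projection'' as the resolution, so this leg is a citation rather than a proof. What is missing is the explicit interpolation. Let $\mu$ be the ($\Gamma$-invariant) Beltrami coefficient of the lift $H$, let $w^{t\mu}\colon{\mathbb D}\to{\mathbb D}$ be the normalized solutions, let $\varphi_{t}=w^{t\mu}|_{S^{1}}$, and set $H_{t}=E(\varphi_{t})^{-1}\circ w^{t\mu}$, where $E$ denotes the Douady--Earle extension. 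Since $t\mu$ is $\Gamma$-invariant, $w^{t\mu}\circ\gamma=\gamma_{t}\circ w^{t\mu}$ with $\gamma_{t}$ M\"obius, hence $\varphi_{t}\circ\gamma=\gamma_{t}\circ\varphi_{t}$ on $S^{1}$, and conformal naturality gives $E(\varphi_{t})\circ\gamma=\gamma_{t}\circ E(\varphi_{t})$; therefore $H_{t}$ commutes with $\Gamma$ and restricts to the identity on $S^{1}$ for every $t$. Moreover $H_{0}=\mathrm{id}$, and $H_{1}=H$ because $H=A\circ w^{\mu}$ for some M\"obius $A$, so $\varphi_{1}=A^{-1}$ and $E(\varphi_{1})=A^{-1}$. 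Uniform quasiconformality follows because the quasisymmetry constants of $\varphi_{t}$ are bounded in terms of $K(H)$ and the dilatation of $E(\varphi)$ is bounded in terms of the quasisymmetry constant of $\varphi$; continuity in $t$ follows from continuity of $t\mapsto t\mu$ in $L^{\infty}$ together with the continuity properties of $E$, which is exactly the technical input Earle and McMullen establish. Note also that connectedness of the fibers of the Bers projection is not an available independent input here---it is essentially a restatement of what is being proved---so it cannot be what ``supplies the path''; the path is simply $t\mu$, corrected back into the fiber by the barycentric extension as above. Without this construction (or an equivalent one), item 3) has not been derived from item 1).
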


A marked Riemann surface by $R$ is a pair $(X, h_{X})$ where
$h_{X} : R\to X$ is an orientation preserving quasiconformal
homeomorphism. Two marked Riemann surfaces $(X, h_{X})$ and $(Y,
h_{Y})$ are equivalent if there is a conformal isomorphism
$\alpha: X\to Y$ such that
$$
f = h_{Y}^{-1}\circ \alpha\circ h_{X} : R\to R
$$
is isotopic to the identity. The Teichm\"uller space $T(R)$ of $R$
is the space of equivalence classes $[(X,h_{X})]$ of all marked
Riemann surfaces $(X, h_{X})$ by $R$, that is,
$$
T(R) =\{ [(X,h_{X})]\}.
$$

\section{\bf Nielsen development, Markov partition, and expanding transitive Markov map}
Suppose $X$ is a closed hyperbolic Riemann surface. Let $g\geq 2$
be the genus of $X$. Since its universal cover is the unit disk
${\mathbb D}$, so through the universal cover, we can write
$X={\mathbb D}/\Gamma_{X}$ where $\Gamma_{X}$ is a Fuchsian group
whose limit set is the whole $S^{1}$.

\begin{definition}
A piecewise smooth map $f: S^{1}\to S^{1}$ is called Markov for
$\Gamma_{X}$ if we can cut $S^{1}$ into finitely many intervals
$I_{1}$, $\cdots$, $I_{k}$ such that
\begin{itemize}
\item[i)] $S^{1} =\cup_{i=1}^{k} I_{i}$,
\item[ii)] $I_{i}$ and $I_{j}$ have disjoint interiors for any
$1\leq i\not=j\leq k$,
\item[iii)] $f|I_{i} =\gamma_{i}|I_{i}$ for some $\gamma_{i}\in
\Gamma_{X}$, and
\item[iv)] $f(I_{j})$ is the union of some intervals of $I_{i}$'s
for each $1\leq j\leq k$.
\end{itemize}
\end{definition}

Let $W=\cup_{i=1}^{n}\partial I_{i}$. The iv) is equivalent to the
statement that
$$
f(W)\subset W.
$$
For any $1\leq i, j\leq k$, we write $i\to j$ if $f(I_{i})\supset
I_{j}$.

\begin{definition} A Markov map $f: S^{1}\to S^{1}$ for the
surface group $\Gamma_{X}$ is called transitive if for any $1\leq
i,j\leq k$, there are $1\leq i_{0}, i_{1}, \cdots, i_{n}\leq k$
such that
$$
i=i_{0}\to i_{1}\to \cdots \to i_{n} =j.
$$
This is equivalent to say that $f^{n}(I_{i})\supset I_{j}$
\end{definition}

\begin{definition} A Markov map $f$ for $\Gamma_{X}$ is called
expanding if there are two constants $C>0$ and $\lambda
>1$ such that
$$
|(f^{n})'(x)|\geq C\lambda^{n}
$$
for $x\in I_{i}$ and $n\geq 0$.
\end{definition}

Suppose $X_{0}={\mathbb D}/\Gamma_{0}$ is the closed Riemann
surface of genus $g$ such that the fundamental domain $D$ of
$X_{0}$ in ${\mathbb D}$ for $\Gamma_{0}=\Gamma_{X_{0}}$ is a
regular $4g$-sided non-Euclidean polygon. We call $X_{0}$ the
standard Riemann surface of genus $g$.

Each angle of $D$ is $\frac{\pi}{2g}$. Each vertex of $D$ belongs
to $4g$ distinct translations $\gamma (D)$ for $\gamma\in
\Gamma_{0}$. All $\gamma (D)$ for $\gamma\in \Gamma_{0}$ form a
net ${\mathcal R}$ in ${\mathbb D}$. The net ${\mathcal R}$ has
the following property: the entire non-Euclidean geodesic passing
through any edge in the net ${\mathcal R}$ is contained in the net
${\mathcal R}$. Let $V_{0}$ be the set of vertices of $D$. Let $V$
be the set of vertices in the net ${\mathcal R}$ which are
adjacent in ${\mathcal R}$ to $V_{0}$ but not in $V_{0}$. Consider
all polygons $\tilde{D}$ adjacent to $D$. Then $V$ are all
vertices of $\tilde{D}$ minus $V_{0}$. For each vertex $p$ of
${\mathcal R}$, there are $2g$ non-Euclidean geodesic passing
through it. These $2g$ non-Euclidean geodesics have $4g$ endpoints
at infinity. Let $W_{p}$ be the set of these $4g$ points at
infinity. Define
$$
W =\cup_{p\in V} W_{p}.
$$
Then $W_{q}\subset W$ for any vertex $q$ of $D$.

The $4g$ sides of $D$ give a set of generators for $\Gamma_{0}$ as
follows. Divide the sides of $D$ into $g$ groups of $4$
consecutive sides; label the $j^{th}$ group $a_{j}, b_{j},
a_{j}^{-1}, b_{j}^{-1}$. Call $a_{j}$ and $a_{j}^{-1}$ and $b_{j}$
and $b_{j}^{-1}$ corresponding sides. For each side $s$ of $D$
there is an element $\gamma_{s}\in \Gamma_{0}$ such that
$$
\gamma_{s} (s) = D \cap \phi_{s} (D)= \hbox{side corresponding to
$s$}.
$$
The set $\{ \gamma_{s}\}$ generates $\Gamma_{0}$. The
non-Euclidean geodesic passing $s$ cuts $S^{1}$ into two
intervals. Let $J_{s}$ be the smaller one. Then we have that
$$
\gamma_{s} (J_{s}\cap W) \subset W.
$$

Let $v\in V_{0}$. Then we have two sides $s$ and $s'$ belonging to
$v$. Let $\beta$ and $\beta'$ are two non-Euclidean geodesic
passing trough $s$ and $s'$, respectively. Consider the interval
$J_{s}\cap J_{s'}$. Let $p\in \beta$, $p'\in \beta'$ be the
vertices of the net ${\mathcal R}$ adjacent to $v$ in the net. Let
$\gamma(D)$ be the translation of $D$ having $v$, $p$, and $p'$ as
its vertices for some $\gamma\in \Gamma_{0}$. Let $q$, $q'$ are
vertices of $\gamma(D)$ such that $q,p,v,p'q'$ are consecutive
vertices of $\gamma(D)$. The non-Euclidean geodesics $\delta$,
$\delta'$ passing $p,q$ and $p',q'$, respectively, do not
intersect and have points $w(v)$ and $w'(v)$ at infinity in the
interior of $J_{s}\cap J_{s'}$. Let $J(v) =[w(v), w'(v)]$ in the
interior of $J_{s}\cap J_{s'}$. Note that $w(v), w'(v)\in W$ and
$J(v)$ does not contain any other points from $W$.

The set $W$ cuts $S^{1}$ into finitely many intervals $I_{1,0}$,
$\cdots$, $I_{k,0}$. Define the map $f_{0}: S^{1}\to S^{1}$ as
$$
f_{0}|I_{j,0} =\gamma_{s}|I_{j,0}, \quad J_{s}\supset I_{j,0}.
$$
Then $f_{0}$ is a piecewise M\"obius transformations and is Markov
since $f_{0}(W)\subset W$.

Since some $I_{j,0}$'s belong to more than one $J_{s}$, there are
a number of ways to define $f_{0}$. This flexibility allows us to
eventually get an expanding Markov map. Given a vertex $v$ of $D$.
Let $s$ be a side of $D$ having $v$ as their common vertex. Let
$v'$ be the other vertex of $s$. We assume that from $v'$ to $v$
are clockwise. Suppose $\tilde{J}_{s}$ is the maximal interval
where
$$
f_{0}|\tilde{J}_{s}=\gamma_{s}|\tilde{J}_{s}.
$$
We require that
\begin{itemize}
\item[a)] $\hbox{int} J_{s} \supset \tilde{J}_{s} \supset
J_{s}\setminus (J(v)\cup J(v'))$, and
\item[b)] $J(v)\subset \tilde{J}_{s}$ and $\tilde{J}_{s}$
disjoint with the interior $\hbox{int} J(v')$.
\end{itemize}
In other words, suppose $J_{s}$ is cut by $W$ into intervals
clockwise $I_{1,0}$, $I_{2,0}=J(v')$, $I_{3,0}$, $\cdots$,
$I_{l_{s}-1,0}=J(v)$, and $I_{l_{s},0}$. So we define
$\tilde{J}_{s}=\cup_{j=3}^{l_{s}-1} I_{j,0}$.

For each non-Eucildean geodesic $\beta$ passing $s$, it is on the
isometric circle $C_{s}$ of $\gamma_{s}$, i.e.,
$|\gamma_{s}'(x)|=1$ for $x\in \beta$. Inside this circle,
$|\gamma_{s}'(x)|>1$ and outside this circle,
$|\gamma_{s}'(x)|<1$. Since $\tilde{J}_{s}$ is inside this circle,
so $|f_{0}'(x)| >1$ for $x\in \tilde{J}_{s}$. Since there are
finitely many $\tilde{J}_{s}$ and each one is a compact interval,
so there is a constant $\lambda_{0}>1$, such that
$$
|f_{0}'(x)| \geq \lambda_{0},\quad \forall x\in I_{j,0}, \quad
1\leq j\leq k.
$$
This implies that $f_{0}$ is an expanding Markov map for
$\Gamma_{0}$ with the Markov partition
$$
\eta_{0,0} =\{I_{1,0},\cdots, I_{k,0}\}.
$$

Now we prove that the Markov map $f_{0}$ is transitive. First, for
each $1\leq i\leq k$, there is some iterate $f^{n}(I_{i})$
contains $J(v)$ for some vertex $v$ of $D$. This is because that
otherwise, $f$ is continuous on $f^{n}(I_{i})$ and
$f^{n+1}(I_{i})$ is an interval longer than $f^{n}(I_{i})$ because
of the expanding condition. But this can not continuous
indefinitely.

From $\eta_{0,0}$, we can generate a sequence of Markov partitions
$$
\eta_{n,0}=f^{-n}_{0}\eta_{0,0}
$$
for $n=0, 1, \cdots$. (See \S4.1 for more detailed description
about intervals in $\eta_{n}$.) Let
$$
\nu_{n,0} = \max_{I\in \eta_{n,0}} |I|.
$$
Since $|f_{0}'(x)|\geq \lambda_{0}$ for $x\in I_{j,0}$, $1\leq
j\leq k$, we have that
$$
\nu_{n,0} \leq \lambda_{0}^{-n}
$$
for $n=0, 1, \cdots$.

Now we construct a transitive expanding Markov map for any closed
Riemann surface $X={\mathbb D}/\Gamma_{X}$ of genus $g$ associated
to an isomorphism $\phi$. Suppose $\phi: \Gamma_{0}\to \Gamma_{X}$
is an isomorphism. Then there is a unique homeomorphism $H:
S^{1}\to S^{1}$ such that
$$
H(\gamma (x)) =\phi (\gamma) (H(x))
$$
for any $x\in S^{1}$ and any $\gamma\in \Gamma_{0}$. Here $H$ is
called the boundary correspondence. Moreover, $H$ is
quasisymmetric.

Let $I_{j}= I_{j,X}=H(I_{j,0})$. Then $S^1=\cup_{j=1}^{k} I_{j}$.
Define
$$
f_{X} =f_{X,\phi}: S^{1}\to S^{1}
$$
as
$$
f_{X}|I_{j} = \phi (\gamma_{s})|I_{j}, \quad  J_{s} \supset
I_{j,0}.
$$
In other words,
$$
f_{X}= H\circ f_{0}\circ H^{-1}.
$$
Then $f_{X}$ is a transitive Markov map for $\Gamma_{X}$ with the
Markov partition
$$
\eta_{0}=\eta_{0,X, \phi}=\{ I_{1}, \cdots, I_{k}\}.
$$
Furthermore, we can generate a sequence of Markov partitions,
$$
\eta_{n}=\eta_{n,X,\phi}=f_{X}^{-n}\eta_{0}
$$
for $n=0, 1, \cdots$. Let
$$
\nu_{n}=\nu_{n,X,\phi}= \max_{I\in \eta_{n}} |I|
$$
for $n\geq 0$. Since a quasisymmetric homeomorphism is H\"older
continuous, so there are constants $A>0$ and $0<\mu<1$ such that
$$
\nu_{n}\leq A \mu^{n}, \quad \forall\; n\geq 0.
$$
Furthermore, we have that

\begin{lemma}~\label{boundedgeometry} The Markov map $f_{X}$ is expanding and the sequence
of Markov partitions has bounded geometry, that is, there is a
constant $C>0$ such that
$$
\frac{|J|}{|I|} \geq C
$$
for any $J\subset I$ with $J\in\eta_{n+1}$ and $I\in \eta_{n}$.
\end{lemma}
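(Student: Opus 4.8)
The plan is to run the classical bounded‑distortion (Koebe) argument for piecewise M\"obius expanding Markov maps, but carried out directly for $f_X$ rather than transported through the boundary map $H$: since $H$ is only quasisymmetric one cannot pull back smoothness from $f_0$, and one need not, because the branches of $f_X$ are themselves M\"obius and the estimate $\nu_n\le A\mu^n$ is already available. Two elementary inputs go into the argument. First, each branch $f_X|_{I_j}=\phi(\gamma_s)|_{I_j}$ is the restriction of a M\"obius transformation preserving $\mathbb D$, hence of a real‑analytic diffeomorphism of $S^1$ with nowhere vanishing derivative; thus $\log|f_X'|$ is Lipschitz on $S^1$, and since there are finitely many branches there is one constant $L$ with $\bigl|\log|f_X'(x)|-\log|f_X'(y)|\bigr|\le L\,|x-y|$ whenever $x,y$ lie in a common $I_j$. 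Second, by the description of the partitions in \S4.1, each $I\in\eta_n$ is an interval on which $f_X^n$ coincides with a single element of $\Gamma_X$, with $f_X^n(I)\in\eta_0$ and, more generally, $f_X^j(I)\in\eta_{n-j}$ for $0\le j\le n$, so that $|f_X^j(I)|\le\nu_{n-j}\le A\mu^{n-j}$.

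The main step is to establish bounded distortion of the iterates. For $x,y$ in a common $I\in\eta_n$ I would telescope,
$$
\bigl|\log|(f_X^n)'(x)|-\log|(f_X^n)'(y)|\bigr|\ \le\ \sum_{j=0}^{n-1}\bigl|\log|f_X'(f_X^j x)|-\log|f_X'(f_X^j y)|\bigr|\ \le\ L\sum_{j=0}^{n-1}\bigl|f_X^j x-f_X^j y\bigr|,
$$
observing that $f_X^j x$ and $f_X^j y$ lie in the common interval $f_X^j(I)\in\eta_{n-j}$, so the $j$‑th term is at most $L\,\nu_{n-j}\le LA\mu^{n-j}$ and the whole sum is at most $LA\mu/(1-\mu)=:\log K$. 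This yields $K^{-1}\le |(f_X^n)'(x)|/|(f_X^n)'(y)|\le K$ for all $x,y$ in a common element of $\eta_n$ and all $n\ge 0$.

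Both conclusions of the lemma then drop out. For expansion: if $I\in\eta_n$ then $|f_X^n(I)|=\int_I|(f_X^n)'|\le K|I|\inf_I|(f_X^n)'|$, while $|f_X^n(I)|\ge\delta:=\min_{1\le j\le k}|I_j|>0$; since also $|I|\le\nu_n\le A\mu^n$, this forces $|(f_X^n)'(x)|\ge (\delta/(KA))\,\mu^{-n}$ for every $x\in I$, so $f_X$ is expanding with $\lambda=\mu^{-1}>1$ (and $C=\delta/(KA)$, shrunk if necessary to cover $n=0$). For bounded geometry: given $J\subset I$ with $J\in\eta_{n+1}$ and $I\in\eta_n$, the map $f_X^n$ carries $I$ onto some $I_i\in\eta_0$ and $J$ onto some $J'\in\eta_1$ with $J'\subset I_i$; comparing the integrals of $|(f_X^n)'|$ over $J$ and $I$ and invoking bounded distortion gives $|J|/|I|\ge K^{-1}\,|J'|/|I_i|$. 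Since $\eta_1$ is a finite partition refining $\eta_0$, the ratios $|J'|/|I_i|$ realized this way form a finite set of positive numbers, bounded below by some $\delta_1>0$; hence $|J|/|I|\ge \delta_1/K=:C$, which is the required uniform constant.

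I expect the only point that really needs care is the bookkeeping used in the second elementary input above — that an element of $\eta_n$ is honestly a single interval on which one M\"obius branch acts, with its forward images tracking elements of the coarser partitions $\eta_{n-j}$ and with $f_X^n$ realizing it onto an element of $\eta_0$ — which is exactly where the Markov structure of the construction enters; everything else is the standard distortion estimate. For the bounded‑geometry assertion alone one could instead prove it first for $f_0$ by the same argument and then transfer it to $\eta_n=H(\eta_{n,0})$, using that a quasisymmetric homeomorphism distorts nested length ratios by a controlled amount; but the direct argument is shorter and simultaneously yields the expansion statement, which cannot be transported across the non‑smooth map $H$.
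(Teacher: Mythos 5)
Your proposal is correct and follows essentially the same route as the paper: a uniform Lipschitz bound on $\log|f_X'|$ on each branch (the paper gets the constant $M/m$ from the piecewise $C^2$ bound, you from the M\"obius branches being analytic with non-vanishing derivative), a telescoped distortion estimate using $|f_X^j(I)|\le\nu_{n-j}\le A\mu^{n-j}$, and then the same two comparisons — with $\min_{I\in\eta_0}|I|$ for expansion and with the finitely many ratios $|J'|/|I_i|$, $J'\in\eta_1\subset I_i\in\eta_0$, for bounded geometry. The differences (integral versus mean-value formulations) are purely cosmetic.
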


\begin{proof}
Since $f_{X}$ is piecewise M\"obius transformations, so it is a
piecewise $C^{2}$ Markov map. That is
$$
\Big| \log |f_{X}'(x)| -\log |f_{X}'(y)|\Big| \leq \frac{1}{m}
|f_{X}'(x) -f_{X}'(y)| \leq \frac{M}{m} |x-y|
$$
for any $x,y\in I\in\eta_{0}$, where
$$
0< m= \min_{x\in \in I\in\eta_{0}} |f_{X}'(x)|,\quad M=\max_{x\in
I\in \eta_{0}} |f_{X}''(x)|<\infty.
$$

Consider any $x,y\in I\in \eta_{n}$. Then $f_{X}^{i}(x),
f_{X}^{i}(y) \in \eta_{n-i}$ and
$$
\Big| \log \frac{|(f_{X}^{n})'(x)|}{|(f_{X}^{n})'(y)|} \Big| \leq
\sum_{i=0}^{n-1} | \log f_{X}'(f_{X}^{i}(x)) -\log
f_{X}'(f_{X}^{i}(y))|
$$
$$
\leq \frac{M}{m} \sum_{i=0}^{n-1} |f_{X}^{i}(x)-f_{X}^{i}(y)| \leq
\frac{AM}{m} \sum_{i=0}^{n-1} \mu^{n-i} \leq \frac{AM}{m (1-\mu)}.
$$
Thus we get a distortion result,
\begin{equation}~\label{distortion1}
B^{-1}\leq \frac{|(f_{X}^{n})'(x)|}{|(f_{X}^{n})'(y)|} \leq B
\end{equation}
for any $x,y\in I\in \eta_{n}$ and any $n>0$, where
$$
B= \exp \Big( \frac{AM}{m (1-\mu)}\Big).
$$

Now for any $n>0$ and any $J\subset I$ with $J\in\eta_{n+1}$ and
$I\in \eta_{n}$,
$$
\frac{|f_{X}^{n}(J)|}{|f_{X}^{n}(I)|} =
\frac{|(f_{X}^{n})'(x)|}{|(f_{X}^{n})'(y)|} \frac{|J|}{|I|}
$$
for some $x,y\in I$. Let
$$
E= \min_{J\subset I, J\in \eta_{1}, I\in \eta_{0}}
\frac{|J|}{|I|}.
$$
Then
$$
\frac{|J|}{|I|} \geq C=\frac{E}{B}.
$$
This says that the sequence $\{\eta_{n}\}_{n=0}^{\infty}$ of
Markov partitions has bounded geometry.

Now for any $n>0$ and $x\in I\in \eta_{n}$, let $y\in I$ such that
$$
|f_{X}^{n} (I)|= |(f_{X}^{n})'(y)| |I|.
$$
then we have that
$$
|(f_{X}^{n})'(x)| = \frac{|(f_{X}^{n})'(x)|}{|(f_{X}^{n})'(y)|}
\frac{|f_{X}^{n} (I)|}{|I|} \geq \frac{K}{BA} \mu^{-n} =C_{0}
\lambda^{n}
$$
where $K=\min_{I\in \eta_{0}}|I|$ and $C_{0} =K/(BA)$ and $\lambda
=\mu^{-1}$. So $f_{X}$ is expanding.
\end{proof}

\section{\bf Symbolic representation and dual symbolic representation}

\subsection{Topological model.} Given any marked Riemann surface $(X,h_{X})$ by
$X_{0}$. Since $h_{X}: X_{0}\to X$ is an orientation preserving
quasiconformal homeomorphism, it induces an isomorphism $\phi:
\Gamma_{0}\to \Gamma_{X}$. Let $f_{X}: S^{1}\to S^{1}$ be the
transitive expanding Markov map constructed in the previous
section with the initial Markov partition $\eta_{0} =\{
I_{1},\cdots, I_{k}\}$. Associating to $\eta_{0}$, we have a
$k\times k$ $0$-$1$ matrix $A=(a_{ij})_{k\times k}$, where
$a_{ij}=1$ if $f(I_{i})\supset I_{j}$ and $a_{ij}=0$ otherwise.
Let
$$
\Sigma_{A} = \{ w=i_{0}i_{1}\cdots i_{n}i_{n+1}\cdots  \;|\;
i_{n}\in \{1,\cdots, k\}, a_{i_{n}}a_{i_{n+1}} =1, n=0,1,
\cdots\}.
$$
The topology of $\Sigma_{A}$ is given as follows. For any $n\geq
0$, let
$$
w_{n}=i_{0}i_{1}\cdots i_{n}, \quad a_{i_{l}i_{l+1}}=1,\; 0\leq
l\leq n-1.
$$
Define the left cylinder
$$
[w_{n}]=\{ w' =w_{n} i_{n+1}'\cdots i_{n+m}'i_{n+m+1}'\cdots\}
$$
where $i_{n+m}'\in \{1,\cdots, k\}$, $a_{i_{n}i_{n+1}'}=1$ and
$a_{i_{n+m}'i_{n+m+1}'}=1$, $m=0, 1, \cdots$. Then all these left
cylinders form a topological basis. The space $\Sigma_{A}$ with
this topological basis is called the symbolic space for
$(X,h_{X})$.

Let $\sigma_{A}$ be the shift defined as
$$
\sigma_{A}: w = i_{0}i_{1}\cdots i_{n}i_{n+1}\cdots \to
\sigma_{A}(w) =i_{1}\cdots i_{n}i_{n+1}\cdots.
$$
Then $(\Sigma_{A}, \sigma_{A})$ is a sub-shift of finite type.

For any pair $i$ and $j$ such that $a_{ij}=1$, there is an
interval $I_{ij}\in \eta_{1}$ such that $f: I_{ij}\to I_{j}$ is a
homeomorphism. Let $g_{ij}: I_{j}\to I_{ij}$ be its inverse. For
each $w=i_{0}i_{1}\cdots i_{n}i_{n+1}\cdots \in \Sigma_{A}$, let
$w_{n}=i_{0}i_{1}\cdots i_{n}$. Define
$$
g_{w_{n}} = g_{i_{0}i_{1}} \cdots g_{i_{n-1}i_{n}}
$$
and
$$
I_{w_{n}} =g_{w_{n}} (I_{i_{n}}).
$$
Then $I_{w_{n}}\in \eta_{n+1}$ and $g_{w_{n}}: I_{i_{n}}\to
I_{w_{n}}$ is a homeomorphism. One can check that
$$
\cdots\subset I_{w_{n}}\subset I_{w_{n-1}} \subset
I_{w_{1}}\subset I_{i_{0}}
$$
Since the length of $I_{w_{n}}$ tends to zero exponentially as $n$
goes to infinity, the set $\cap_{n=0}^{\infty} I_{w_{n}}$ contains
one point $x_{w}$. Define
$$
\pi (w)=x_{w}.
$$
Then we have that
$$
\pi (\sigma_{A} (w)) = f_{X} (\pi (w)), \quad \forall \; w\in
\Sigma_{A}.
$$
Note that $\pi$ is 1-1 except for countably many points which are
endpoints of $I_{w_{n}}$ for all $w_{n}$, $n\geq 0$.

{\em Thus from the dynamical system point of views, the symbolic
dynamical system $(\Sigma_{A}, \sigma_{A})$ is the topological
model for all marked Riemann surfaces $(X, h_{X})$ by $X_{0}$.}

\subsection{\bf Geometric models.}
Now we are going to define the dual symbolic space
$\Sigma_{A}^{*}$ and geometric models for all marked Riemann
surfaces $(X,h_{X})$ by $X_{0}$. For any finite strings
$w_{n}=i_{0}i_{1}\cdots i_{n-1}i_{n}$ with $a_{i_{l}i_{l+1}}=1$,
$0\leq l\leq n-1$, we rewrite it as $w^{*}_{n} =
j_{n}j_{n-1}\cdots j_{1}j_{0}$, where $j_{n}=i_{0}$,
$j_{n-1}=i_{1}$, $\cdots$, $j_{1}=i_{n-1}$, $j_{0}=i_{n}$. Then
$a_{j_{l}j_{l-1}} =1$. Define the right cylinder
$$
[w_{n}^{*}] = \{ \tilde{w}^{*} = \cdots j_{n+m+1}'\cdots
j_{n+m}'\cdots j_{n+1}'w_{n}^{*}\}
$$
for all $j_{n+m}'\in \{1, \cdots, k\}$ and
$a_{j_{n+m+1}'j_{n+m}'}=1$, $m=1, \cdots$ and
$a_{j_{n+1}'j_{n}}=1$ . All these right cylinders form a
topological basis for the space
$$
\Sigma_{A}^{*} =\{ w^{*} =\cdots j_{n}\cdots j_{1}j_{0}\;|\;
j_{n}\in \{ 1, \cdots, k\}\}.
$$
We call this topological space the dual symbolic space. The left
shift $\sigma_{A}^{*}:\Sigma_{A}^{*}\to \Sigma_{A}^{*}$ is defined
as
$$
\sigma_{A}^{*}: w^{*} =\cdots j_{n}\cdots j_{1}j_{0} \to
\sigma^{*}_{A} (w^{*}) = \cdots j_{n}\cdots j_{1}.
$$

A function
$$
S(w^{*}): \Sigma_{A}^{*}\to {\mathbb R}
$$
is called Lipschitz if there are constants $C>0$ and $0<\mu<1$
such that
$$
|S(w^{*})-S(\tilde{w}^{*})| \leq C\mu^{n}
$$
as long as the first $n$ digits from the right of $w^{*}$ and
$\tilde{w}^{*}$ are the same. The geometric model for a marked
Riemann surface $(X,h_{X})$ by $X_{0}$ is defined as a Lipschitz
function as follows.

Let
$$
f_{X}: S^{1}\to S^{1}
$$
be the transitive expanding Markov map with the Markov partition
$$
\eta_{0} =\{ I_{1}, \cdots, I_{k}\}
$$
for the marked Riemann surface $(X, h_{X})$ by $X_{0}$. Then we
have a sequence of Markov partitions
$$
\eta_{n}=f_{X}^{-n}\eta_{0}
$$
for $n=1, 2, \cdots$. Each interval in
$\eta_{n}$ has a unique labeling $w_{n}^{*}= j_{n}\cdots
j_{1}j_{0}$, which we denote as $I_{w_{n}^{*}}$. Then
$I_{\sigma^{*}_{A} (w_{n}^{*})}$ is an interval in $\eta_{n-1}$,
where $\sigma^{*}_{A} (w_{n}^{*})= j_{n}\cdots j_{1}$. We have
that $I_{w_{n}^{*}}\subset I_{\sigma^{*}_{A} (w_{n}^{*})}$.
Define the pre-scaling function at $w_{n}^{*}$ as
$$
S_{X}(w^{*}_{n}) =\frac{|I_{w_{n}^{*}}|}{|I_{\sigma^{*}_{A}
(w_{n}^{*})}|}.
$$

\begin{lemma}~\label{scalingfunction}
For any $w^{*}= \cdots j_{n}\cdots j_{1}j_{0}$, let $w_{n}^{*}
=j_{n}\cdots j_{1}j_{0}$, then
$$
S_{X}(w^{*})= \lim_{n\to \infty} S_{X}(w^{*}_{n})=\lim_{n\to
\infty} \frac{|I_{w_{n}^{*}}|}{|I_{\sigma^{*}_{A} (w_{n}^{*})}|}
$$
exists and converges uniformly on $w^{*}\in \Sigma_{A}^{*}$.
Moreover,
$$
S_{X}(w^{*}): \Sigma_{A}^{*} \to (0,1)
$$
defines a Lipschitz continuous function.
\end{lemma}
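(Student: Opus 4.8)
The plan is to establish the uniform Cauchy property of the sequence $S_X(w^*_n)$ using the bounded-geometry and bounded-distortion estimates already in hand from Lemma~\ref{boundedgeometry}. Fix $w^* = \cdots j_n \cdots j_1 j_0$ and compare $S_X(w^*_{n+1})$ with $S_X(w^*_n)$. Writing $I_{w^*_n} \subset I_{\sigma^*_A(w^*_n)}$ and noting that the passage from level $n$ to level $n+1$ amounts to prepending one more symbol $j_{n+1}$ on the left, I would push everything forward by $f_X^n$: the ratio $|I_{w^*_{n+1}}|/|I_{\sigma^*_A(w^*_{n+1})}|$ equals $|I_{w^*_n}|/|I_{\sigma^*_A(w^*_n)}|$ multiplied by a derivative-distortion factor $|(f_X^n)'(\xi)|/|(f_X^n)'(\eta)|$ for points $\xi,\eta$ lying in a common interval of $\eta_{n+1}$ (more precisely, in a common interval of $\eta_n$). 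By the distortion bound \eqref{distortion1}, this factor is within $B^{\pm 1}$ of $1$; but more is true — by the same $C^2$-distortion argument applied over the first $n$ iterates, the logarithm of this ratio is bounded by $\frac{M}{m}\sum_{i} |f_X^i(\xi) - f_X^i(\eta)|$, and since $\xi, \eta$ lie in a common interval of $\eta_n$ their forward images at time $i$ lie in a common interval of $\eta_{n-i}$, whose diameter is at most $A\mu^{n-i}$. Actually the cleaner route: the two points can be taken in a common interval of $\eta_{n+1}$, so at time $i \le n$ they lie in an interval of $\eta_{n+1-i}$ of size $\le A\mu^{n+1-i}$, giving $|\log(\text{ratio})| \le \frac{AM}{m}\sum_{i=0}^{n-1}\mu^{n+1-i} \le \frac{AM\mu}{m(1-\mu)}\mu$. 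Redoing this with the comparison between level $n$ and level $n+p$ yields $|\log(S_X(w^*_{n+p})/S_X(w^*_n))| \le \frac{AM}{m(1-\mu)}\mu^{n}$, which is the key geometric-series tail estimate.

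From that tail estimate the convergence and its uniformity are immediate: $\{S_X(w^*_n)\}$ is uniformly Cauchy on $\Sigma_A^*$, hence $S_X(w^*) = \lim_n S_X(w^*_n)$ exists and the convergence is uniform in $w^*$. Since each $S_X(w^*_n)$ depends only on the first $n$ symbols of $w^*$ from the right, it is (trivially, being locally constant at that level) continuous on $\Sigma_A^*$, and a uniform limit of continuous functions is continuous; so $S_X$ is continuous.

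For the Lipschitz bound I would argue as follows. If $w^*$ and $\tilde w^*$ agree in their first $n$ digits from the right, then $S_X(w^*_m) = S_X(\tilde w^*_m)$ for all $m \le n$, and applying the tail estimate twice, $|S_X(w^*) - S_X(\tilde w^*)| \le |S_X(w^*) - S_X(w^*_n)| + |S_X(w^*_n) - S_X(\tilde w^*_n)| + |S_X(\tilde w^*_n) - S_X(\tilde w^*)| \le C'\mu^n + 0 + C'\mu^n$, which is the required Lipschitz estimate with a new constant and the same ratio $\mu$. To turn the multiplicative tail estimate into an additive one I use that $S_X(w^*_n)$ is bounded away from $0$ and from $1$: the lower bound $S_X(w^*_n) \ge C$ is exactly the bounded-geometry conclusion of Lemma~\ref{boundedgeometry}, and the upper bound $S_X(w^*_n) \le 1 - C$ follows because $I_{\sigma^*_A(w^*_n)}$ contains at least one sibling interval of $I_{w^*_n}$ in $\eta_{n+1}$ (by transitivity, or simply because every interval of $\eta_n$ splits into at least two subintervals of $\eta_{n+1}$ — otherwise $f_X$ would not be expanding), and that sibling also has relative length $\ge C$. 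These two bounds also pin down the range: $S_X(w^*) \in [C, 1-C] \subset (0,1)$, giving the stated codomain.

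The main obstacle is purely bookkeeping: tracking which interval of which partition the comparison points $\xi, \eta$ lie in as one iterates $f_X$, and being careful that the "prepend one symbol" operation on $\Sigma_A^*$ corresponds correctly to the refinement $\eta_n \rightsquigarrow \eta_{n+1}$ and hence to precomposition, not postcomposition, of the branch inverses $g_{ij}$. Once the indices are lined up, everything reduces to the geometric series $\sum \mu^{n-i}$ exactly as in the proof of Lemma~\ref{boundedgeometry}, so no genuinely new estimate is needed — the lemma is essentially a corollary of the distortion control established there, packaged as a statement about the limit function on the dual symbolic space.
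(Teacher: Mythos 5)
Your proposal follows essentially the same route as the paper's proof: the distortion estimate from Lemma~\ref{boundedgeometry} applied to $f^{m-n}$ gives the $C\mu^{n}$ tail bound, hence uniform Cauchy convergence, and then the Lipschitz estimate $2C\mu^{n}$ via the common truncation $w_{n}^{*}$, exactly as in the paper. Your additional remark pinning the values in $[C,1-C]\subset(0,1)$ (a point the paper leaves implicit) is sound in substance, though the aside that a partition interval with a single child ``would contradict expansion'' is not quite a proof --- the correct justification is that in this Bowen--Series construction each branch image $f_{X}(I_{i})$ is a long arc containing several partition intervals, so every interval of $\eta_{n}$ indeed has at least two children in $\eta_{n+1}$.
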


\begin{proof}
For any $w^{*}= \cdots j_{n}\cdots j_{1}j_{0}$, let $w_{n}^{*}
=j_{n}\cdots j_{1}j_{0}$, we consider the sequence $\{
S_{X}(w^{*}_{n})\}_{n=0}^{\infty}$. For any $m>n>0$,
$$
S_{X}(w^{*}_{m}) =\frac{|I_{w_{m}^{*}}|}{|I_{\sigma^{*}_{A}
(w_{m}^{*})}|} =
\frac{|(f^{m-n})'(x)|}{|(f^{m-n})'(y)|}\frac{|I_{w_{n}^{*}}|}{|I_{\sigma^{*}_{A}
(w_{n}^{*})}|}=\frac{|(f^{m-n})'(x)|}{|(f^{m-n})'(y)|}S_{X}(w^{*}_{n})
$$
for some $x\in I_{w_{n}^{*}}$ and $y\in I_{\sigma^{*}_{A}
(w_{n}^{*})}$. Thus we get
$$
|S_{X}(w^{*}_{m}) -S_{X}(w^{*}_{n})|  \leq
\Big|\frac{|(f^{m-n})'(x)|}{|(f^{m-n})'(y)|}-1\Big|
S_{X}(w^{*}_{n})\leq
\Big|\frac{|(f^{m-n})'(x)|}{|(f^{m-n})'(y)|}-1\Big|
$$
From the calculation which we got~(\ref{distortion1}), we have a
constant $C>0$ such that
$$
\Big| \frac{|(f^{m-n})'(x)|}{|(f^{m-n})'(y)|}-1\Big| \leq C
\mu^{n}.
$$
Thus,
$$
|S_{X}(w^{*}_{m}) -S_{X}(w^{*}_{n})| \leq C \mu^{n}.
$$
This implies that $\{ S_{X}(w^{*}_{n})\}_{n=0}^{\infty}$ is a
Cauchy sequence and
$$
S_{X}(w^{*})= \lim_{n\to \infty} S_{X}(w^{*}_{n})=\lim_{n\to
\infty} \frac{|I_{w_{n}^{*}}|}{|I_{\sigma^{*}_{A} (w_{n}^{*})}|}
$$
exists. Furthermore, the limit is uniformly on $\Sigma_{A}^{*}$.

Moreover, if we consider $w^{*}$ and $\tilde{w}^{*}$ with the same
first $n$ digits from the right, then we can write them as
$w^{*}=\cdots w^{*}_{n}$ and $\tilde{w}^{*} =\cdots w_{n}^{*}$. So
we have that for any $m\geq n$,
$$
|S_{X}(w^{*}_{m})-S_{X}(\tilde{w}^{*}_{m})| =
\Big|\frac{|(f^{m-n})'(x)|}{|(f^{m-n})'(y)|}-
\frac{|(f^{m-n})'(\tilde{x})|}{|(f^{m-n})'(\tilde{y})|}\Big|
S_{X}(w^{*}_{n}) \leq 2C\mu^{n}
$$
for some $x,y\in I_{\sigma^{*}_{A}(w_{m}^{*})}$ and
$\tilde{x},\tilde{y}\in I_{\sigma^{*}_{A}(\tilde{w}_{m}^{*})}$. By
taking limit, we get
$$
|S_{X}(w^{*})-S_{X}(\tilde{w}^{*})|\leq 2C\mu^{n}.
$$
Therefore,
$$
S_{X}(w^{*}): \Sigma_{A}^{*} \to (0,1)
$$
is a Lipschitz function.
\end{proof}

\begin{definition} For a marked Riemann surface $(X,h_{X})$ by $X_{0}$,
we call the Lipschitz function
$$
S_{X}(w^{*}): \Sigma_{A}^{*} \to (0,1)
$$
its scaling function.
\end{definition}

The scaling function is a geometric model because of the following
theorem.

\vspace*{10pt}
\begin{theorem}~\label{Ji1}
Suppose $(X,h_{X})$ and $(Y,h_{Y})$ are two marked Riemann
surfaces by $X_{0}$. Then there is a conformal map $\alpha: X\to
Y$ such that $h_{Y}^{-1}\circ \alpha\circ h_{X}: X_{0}\to X_{0}$
is homotopic to the identity if and only if $S_{X} =S_{Y}$.
\end{theorem}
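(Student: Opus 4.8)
The proof will pass through the boundary circle. Recall that $f_{X}$ and $f_{Y}$ are constructed from the same combinatorial data (the same $0$--$1$ matrix $A$), so the labelled Markov partitions $\eta_{n,X}$ and $\eta_{n,Y}$ are combinatorially identical. Put
\[
h=H_{Y}\circ H_{X}^{-1}:S^{1}\to S^{1},
\]
where $H_{X},H_{Y}$ are the boundary correspondences attached to $h_{X},h_{Y}$; equivalently, $h$ is the unique homeomorphism with $h\circ\pi_{X}=\pi_{Y}$, equivalently $h(I_{w^{*},X})=I_{w^{*},Y}$ for every admissible word $w^{*}$. Three facts are immediate: $h$ is quasisymmetric (a composition of two quasisymmetric maps); $h$ conjugates $f_{X}$ to $f_{Y}$; and, since $H_{X}\gamma H_{X}^{-1}=\phi_{X}(\gamma)$ and $H_{Y}\gamma H_{Y}^{-1}=\phi_{Y}(\gamma)$ on $S^{1}$ for $\gamma\in\Gamma_{0}$, the map $h$ intertwines the markings, $h\circ\phi_{X}(\gamma)=\phi_{Y}(\gamma)\circ h$ for all $\gamma\in\Gamma_{0}$. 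Thus $h$ is a quasisymmetric conjugacy between the Fuchsian groups $\Gamma_{X}$ and $\Gamma_{Y}$, both of the first kind, and the whole content of the theorem is that $S_{X}=S_{Y}$ if and only if this particular $h$ is the restriction of a M\"obius transformation.

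$(\Leftarrow)$ Assume $S_{X}=S_{Y}$. The key step is to show that $h$ is a $C^{1+\alpha}$ diffeomorphism of $S^{1}$ for some $\alpha>0$. Fix $p=\pi_{X}(i_{0}i_{1}\cdots)$ and let $I_{i_{0}\cdots i_{n},X}$ be the nested cylinders shrinking to $p$. The consecutive ratio $|I_{i_{0}\cdots i_{n},X}|/|I_{i_{0}\cdots i_{n-1},X}|$ is precisely the pre-scaling value $S_{X}(i_{n}\cdots i_{0})$, so one has the product formula $|I_{i_{0}\cdots i_{n},X}|=|I_{i_{0},X}|\prod_{k=1}^{n}S_{X}(i_{k}\cdots i_{0})$, and likewise for $Y$; hence
\[
\frac{|h(I_{i_{0}\cdots i_{n},X})|}{|I_{i_{0}\cdots i_{n},X}|}=\frac{|I_{i_{0}\cdots i_{n},Y}|}{|I_{i_{0}\cdots i_{n},X}|}=\frac{|I_{i_{0},Y}|}{|I_{i_{0},X}|}\prod_{k=1}^{n}\frac{S_{Y}(i_{k}\cdots i_{0})}{S_{X}(i_{k}\cdots i_{0})}.
\]
By the uniform exponential estimate of Lemma~\ref{scalingfunction}, each finite pre-scaling value differs from the limiting value of any extension in $\Sigma_{A}^{*}$ by $O(\mu^{k})$; since $S_{X}=S_{Y}$ on $\Sigma_{A}^{*}$, the $k$-th factor above is $1+O(\mu^{k})$, so the product converges absolutely to a finite, strictly positive limit, which is $|h'(p)|$. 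The same estimate, together with the bounded geometry of $\{\eta_{n}\}$ (Lemma~\ref{boundedgeometry}) and the distortion bound~(\ref{distortion1}), shows that $|h'|$ depends H\"older-continuously on $p$; running the argument for $h^{-1}$ (which conjugates $f_{Y}$ to $f_{X}$ with $S_{Y}=S_{X}$) shows $h^{-1}$ is $C^{1}$ as well, so $h$ is a $C^{1+\alpha}$ diffeomorphism.

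Now $h$ is a quasisymmetric conjugacy between $\Gamma_{X}$ and $\Gamma_{Y}$ that is differentiable at every point of the limit set with nowhere-vanishing derivative, so by Tukia's theorem (the two-dimensional strengthening of Mostow rigidity proved in \S4.2) $h$ extends to a M\"obius transformation of $\mathbb{D}$. Together with the intertwining relation this M\"obius map conjugates $\Gamma_{X}$ onto $\Gamma_{Y}$ compatibly with $\phi_{X},\phi_{Y}$, hence descends to a conformal isomorphism $\alpha:X=\mathbb{D}/\Gamma_{X}\to Y=\mathbb{D}/\Gamma_{Y}$; and $h_{Y}^{-1}\circ\alpha\circ h_{X}$ lifts to a quasiconformal self-map of $\mathbb{D}$ whose restriction to $S^{1}$ is $H_{Y}^{-1}\circ(H_{Y}\circ H_{X}^{-1})\circ H_{X}=\mathrm{id}$, so by the equivalence $(1)\Leftrightarrow(2)$ in Theorem~\ref{EarleMcMullen} it is homotopic to the identity. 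Conversely $(\Rightarrow)$, if a conformal $\alpha:X\to Y$ exists with $h_{Y}^{-1}\circ\alpha\circ h_{X}$ homotopic to the identity, then by Theorem~\ref{EarleMcMullen} one may choose the (M\"obius) lift $g$ of $\alpha$ so that $g|_{S^{1}}=H_{Y}\circ H_{X}^{-1}=h$, so $g(I_{w_{n}^{*},X})=I_{w_{n}^{*},Y}$ and
\[
S_{Y}(w_{n}^{*})=\frac{|g(I_{w_{n}^{*},X})|}{|g(I_{\sigma_{A}^{*}(w_{n}^{*}),X})|}=\frac{|g'(\xi_{n})|}{|g'(\eta_{n})|}\,S_{X}(w_{n}^{*})
\]
for points $\xi_{n},\eta_{n}$ in the common interval $I_{\sigma_{A}^{*}(w_{n}^{*}),X}$; as this interval shrinks to a point while $|g'|$ is continuous and bounded between two positive constants on the compact circle, the distortion factor tends to $1$, and Lemma~\ref{scalingfunction} yields $S_{Y}=S_{X}$ on $\Sigma_{A}^{*}$.

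The main obstacle is the regularity upgrade in the ``if'' direction: converting the bare equality of scaling functions into genuine $C^{1}$ (indeed $C^{1+\alpha}$) smoothness of $h$, which requires controlling the accumulated distortion so that the infinite product for $|h'|$ converges to a continuous, strictly positive function. Once this is in hand, the rigidity input (Tukia's theorem) and the two applications of Theorem~\ref{EarleMcMullen} are routine, and the ``only if'' direction is the easy one.
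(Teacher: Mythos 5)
Your argument is, in substance, the paper's own: the forward direction via Theorem~\ref{EarleMcMullen}, a M\"obius lift, and the mean value theorem with the distortion factor tending to $1$; the reverse direction by converting $S_X=S_Y$ into a two-sided bound on $|h(I_{w_n^*,X})|/|I_{w_n^*,X}|$ over all cylinders (your multiplicative product is exactly the paper's telescoping sum of logarithms, both controlled by the exponential estimate of Lemma~\ref{scalingfunction} together with the lower bound from bounded geometry), followed by Tukia's theorem and Theorem~\ref{EarleMcMullen} again. The one place you overreach is the regularity upgrade: from convergence of the products along the nested cylinders shrinking to $p$ you conclude that the limit ``is $|h'(p)|$'' and that $h$ is a $C^{1+\alpha}$ diffeomorphism. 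Convergence of $|h(I_n)|/|I_n|$ along a single nested sequence of cylinders does not by itself give differentiability at $p$: one must compare arbitrary small intervals about $p$ with the cylinders, which needs an additional adjacent-cylinder/bounded-geometry argument that you only gesture at, and the asserted H\"older continuity of $|h'|$ is likewise not proved. Fortunately the stronger claim is unnecessary: the uniform $1+O(\mu^k)$ bound on your factors already yields a constant $C_3$ with $C_3^{-1}\le |h(I_{w_n^*,X})|/|I_{w_n^*,X}|\le C_3$ for every cylinder, and since the cylinder endpoints are dense this makes $h$ bi-Lipschitz; hence $h$ is differentiable almost everywhere with derivative pinched between $C_3^{-1}$ and $C_3$, and Theorem~\ref{Tukia} requires only one point of differentiability with nonzero derivative. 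That weaker statement is precisely what the paper establishes, so your proof is correct once the $C^{1+\alpha}$ claim is replaced by this bi-Lipschitz argument, which your estimates already contain.
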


\begin{proof}
We first prove the ``only if" part. Suppose there is a conformal
map $\alpha: X\to Y$ such that $h_{Y}^{-1}\circ \alpha\circ h_{X}:
X_{0}\to X_{0}$ is homotopic to the identity. Let
$$
H_{X}, H_{Y}, \Psi: {\mathbb D}\to {\mathbb D}
$$
be lifts of $h_{X}$, $h_{Y}$, and $\alpha$. Then $H_{Y}^{-1}\circ
\Psi\circ H_{X}: {\mathbb D}\to {\mathbb D}$ is a lift of
$h_{Y}^{-1}\circ \alpha\circ h_{X}: X_{0}\to X_{0}$. From
Theorem~\ref{EarleMcMullen}, there is a lift $H_{Y}^{-1}\circ
\Psi\circ H_{X}$ whose restriction to the unit circle $S^{1}$ is
the identity. Thus
$$
H_{Y} (x) = \Psi \circ H_{X}(x), \quad \forall \; x\in S^{1}.
$$
Note that $h_{X}$ and $h_{Y}$ induce isomorphisms from
$\Gamma_{0}$ to $\Gamma_{X}$ and $\Gamma_{Y}$, respectively, and
$H_{X}$ and $H_{Y}$ are the corresponding boundary
correspondences. From the definition of the Markov maps $f_{X}$
and $f_{Y}$, we have that
$$
\Psi \circ f_{X}= f_{Y}\circ \Psi.
$$
This further implies that $I_{w^{*}_{n},Y} =
\Psi(I_{w^{*}_{n},X})$ for all $w^{*}_{n}$. By the mean value
theorem, we have $\xi_{n}\in I_{w_{n}^{*},Y}$ and $\eta_{n}\in
I_{\sigma^{*}_{A} (w_{n}^{*}),Y}$ such that
$$
S_{Y}(w^{*}) =\lim_{n\to \infty}
\frac{|I_{w_{n}^{*},Y}|}{|I_{\sigma^{*}_{A} (w_{n}^{*}),Y}|} =
\lim_{n\to \infty}
\frac{|\Psi(I_{w_{n}^{*},X})|}{|\Psi(I_{\sigma^{*}_{A}
(w_{n}^{*}),X})|}
$$
$$
= \lim_{n\to \infty}
\frac{|\Psi'(\xi_{n})|}{|\Psi'(\eta_{n})|}\frac{|I_{w_{n}^{*},X}|}
{|I_{\sigma^{*}_{A} (w_{n}^{*}),X}|}
 = \lim_{n\to \infty}
\frac{|I_{w_{n}^{*},X}|}{|I_{\sigma^{*}_{A} (w_{n}^{*}),X}|} =
S_{X}(w^{*})
$$
since $|\xi_{n}-\eta_{n}| \leq |I_{\sigma^{*}_{A} (w_{n}^{*}),X}|
\to 0$.

Now we prove the ``if" part. Consider $h=h_{Y}\circ h_{X}^{-1}:
X\to Y$. Suppose $X={\mathbb D}/\Gamma_{X}$ and $Y={\mathbb
D}/\Gamma_{Y}$. Let $\phi_{XY}: \Gamma_{X}\to \Gamma_{Y}$ be the
isomorphism induced by $h$. Let $H: S^{1}\to S^{1}$ be the
boundary correspondence, that is,
$$
H\circ \gamma (x)= \phi_{XY} (\gamma )\circ H (x)
$$
for all $\gamma\in \Gamma_{X}$ and $x\in S^{1}$.  From the
definition of $f_{X}$ and $f_{Y}$, $H$ is a topological conjugacy
from $f_{X}$ to $f_{Y}$ on $S^{1}$, that is,
$$
f_{Y}\circ H =H\circ f_{X}.
$$
From $S_{Y}=S_{X}$, we claim that $H$ is a Lipschitz map from
$S^{1}$ to $S^{1}$.

We prove this claim. For any interval $I_{w_{n}^{*},X}\in
\eta_{n,X}$, $I_{w_{n}^{*},Y}= H(I_{w_{n}^{*},X}) \in \eta_{n,Y}$.
Then
$$
\Big|\log
\Big(\frac{|H(I_{w_{n}^{*},X})|}{|I_{w_{n}^{*},X}|}\Big)\Big|=
\Big|\log \Big(\frac{|I_{w_{n}^{*},Y}|}{|I_{w_{n}^{*},X}|}
\Big)\Big|=|\log |I_{w_{n}^{*},Y}| -\log |I_{w_{n}^{*},X}||
$$
$$
=\Big|\sum_{k=0}^{n-1} \Big(\log S_{Y}(w^{*}_{n-k})-\log
S_{X}(w^{*}_{n-k})\Big) + \log |I_{j_{0},Y}| - \log
|I_{j_{0},X}|\Big|
$$
$$
\leq \sum_{k=0}^{n-1} \Big| \log S_{Y}(w^{*}_{n-k})-\log
S_{X}(w^{*}_{n-k})\Big| + \Big|\log |I_{j_{0},Y}| - \log
|I_{j_{0},X}|\Big|
$$
$$
\leq \sum_{k=0}^{n-1}
\Big|S_{Y}(w^{*}_{n-k})-S_{X}(w^{*}_{n-k})\Big| + C_{1}
$$
where
$$
C_{1} =\sup_{j_{0}} \Big|\log |I_{j_{0},Y}| - \log
|I_{j_{0},X}|\Big| <\infty.
$$
There are constants $C_{2}>0$ and $0<\mu<1$ such that that
$$
|S_{X}(w^{*}_{n-k}) - S_{X}(w^{*})| \leq C_{2} \mu^{n-k}
$$
and
$$
|S_{Y}(w^{*}_{n-k}) - S_{Y}(w^{*})| \leq C_{2} \mu^{n-k}.
$$
But $S_{X}(w^{*})=S_{Y}(w^{*})$. So we have that
$$
\Big|S_{X}(w^{*}_{n-k})-S_{Y}(w^{*}_{n-k})\Big| \leq 2C_{2}
\mu^{n-k}.
$$
This implies that
$$
C^{-1}_{3} \leq \frac{|H(I_{w_{n}^{*},X})|}{|I_{w_{n}^{*},X}|}
\leq C_{3}
$$
where $C_{3} = \exp (2C_{2}/(1-\mu) +C_{1})$.

Since the set of all endpoints of $\{ H(I_{w_{n}^{*},X})\}$ and
the set of all endpoints $\{I_{w_{n}^{*},X}\}$ are both dense in
$S^{1}$, so the additive formula implies that for any $x,y\in
S^{1}$,
$$
C_{3}^{-1} \leq \frac{|H(x)-H(y)|}{|x-y|}\leq C_{3}.
$$
That is $H$ is a bi-Lipschitz map. Therefore, $H$ is
differentiable almost everywhere and has a point $x_{0}\in S^{1}$
such that $H'(x_{0})\neq 0$. According to the following Tukia's
theorem, $H$ is a M\"obius transformation $\Psi$ on $S^{1}$. This
implies that
$$
H_{Y}^{-1} \circ \Psi \circ H_{X}(x) =x
$$
for all $x\in S^{1}$. From Theorem~\ref{EarleMcMullen},
$$
h_{Y}^{-1}\circ \alpha\circ h_{X}: X_{0}\to X_{0}
$$
is homotopic to the identity, where $\alpha: X\to Y$ is the
conformal map which has a lift $\Psi$.
\end{proof}

The following theorem is first proved by Tukia in~\cite{Tukia}. It
is a stronger version of Mostow's rigidity theorem in the
$2$-dimensional case. We have proved a similar result for
dynamical systems with possibly critical points presented
in~\cite{Jiang1,Jiang2,Jiang3} (see also a survey article~\cite{Jiang8}).
For the completeness of this paper, we give a proof of
Tukia's theorem from the dynamical system point of views.

\vspace*{10pt}
\begin{theorem}[Tukia~\cite{Tukia}]~\label{Tukia}
Suppose $X={\mathbb D}/\Gamma_{X}$ and $Y={\mathbb D}/\Gamma_{Y}$
are two closed hyperbolic Riemann surface of genus $g\geq 2$.
Suppose $\phi: \Gamma_{X}\to \Gamma_{Y}$ is an isomorphism. Let
$H: S^{1}\to S^{1}$ be the boundary correspondence. Then $H$ is a
M\"obius transformation if and only if $H$ is differentiable at
one point with non-zero derivative.
\end{theorem}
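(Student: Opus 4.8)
The plan is to use the construction of \S3: the boundary correspondence $H$ conjugates the expanding transitive piecewise-M\"obius Markov maps, $f_{Y}\circ H=H\circ f_{X}$. One implication is immediate, since a M\"obius transformation restricts to a real-analytic circle diffeomorphism with nowhere vanishing derivative. For the converse I would argue in three stages: first spread the differentiability at $x_{0}$ over a dense set, then promote this to $H\in C^{1}$ with $H'$ bounded away from $0$ and $\infty$, and finally extract the M\"obius rigidity.

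\textbf{Stage 1 (spreading).} Since $f_{X}$ is transitive and expanding, the backward orbit $O^{-}(x_{0})=\bigcup_{n\ge 0}f_{X}^{-n}(x_{0})$ is dense in $S^{1}$, and every $z\in S^{1}$ is approached by points of $O^{-}(x_{0})$ lying in Markov intervals of arbitrarily large depth. If $f_{X}^{n}(y)=x_{0}$ and $I\in\eta_{n,X}$ is the depth-$n$ interval containing $y$, then on $I$ the conjugacy gives the local identity $H=\bigl(f_{Y}^{n}|_{H(I)}\bigr)^{-1}\circ H\circ f_{X}^{n}$, a composition of real-analytic branch maps with $H$ evaluated at $x_{0}$; hence $H$ is differentiable at $y$ and
\[
H'(y)=\frac{(f_{X}^{n})'(y)}{(f_{Y}^{n})'(H(y))}\,H'(x_{0})\neq 0 .
\]

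\textbf{Stage 2 (promotion to $C^{1}$).} Because $f_{X}$ and $f_{Y}$ are piecewise M\"obius, they are piecewise $C^{2}$ and the distortion estimate~\eqref{distortion1} holds for both; therefore on any depth-$n$ interval the ratio $(f_{X}^{n})'/\bigl((f_{Y}^{n})'\circ H\bigr)$ varies by a factor $1+O(\mu^{n})$. Combining this with the quasisymmetry of $H$ (which bounds $|H(I)|/|I|$ between comparable intervals), I would show that $y\mapsto H'(y)$ is uniformly continuous on $O^{-}(x_{0})$ and takes values in a fixed compact subinterval of $(0,\infty)$ (note $H$ is orientation preserving, being a boundary correspondence of an orientation-preserving homeomorphism). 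It then extends to a positive continuous, in fact H\"older, function on $S^{1}$, so $H$ is a $C^{1}$, even $C^{1+\alpha}$, diffeomorphism. \emph{I expect this extension step to be the main obstacle}: it is the point at which the hyperbolic geometry genuinely enters, and it must be handled with care because a generic quasisymmetric conjugacy is nowhere differentiable.

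\textbf{Stage 3 (M\"obius rigidity).} Once $H\in C^{1}$ with $H'>0$, at each periodic point $p=f_{X}^{m}(p)$ the iterate $f_{X}^{m}$ coincides near $p$ with a single hyperbolic $\gamma\in\Gamma_{X}$, and a $C^{1}$ conjugacy preserves the derivative at a fixed point, so $(f_{X}^{m})'(p)=(f_{Y}^{m})'(H(p))$; i.e.\ $\gamma$ and $\phi(\gamma)$ have the same multiplier $e^{-\ell}$ at their attracting fixed points. Since the periodic orbits of $f_{X}$ realize all conjugacy classes of hyperbolic elements of $\Gamma_{X}$ (a standard feature of such codings), this says $\phi$ matches the marked length spectra, equivalently $\operatorname{tr}\gamma=\pm\operatorname{tr}\phi(\gamma)$ for all $\gamma$; as both holonomies are non-elementary, the two representations are then conjugate in $PSL(2,\mathbb{R})$ by a M\"obius map $\Psi$ inducing $\phi$, and $\Psi^{-1}\circ H$ commutes with the minimal action of $\Gamma_{X}$ on $S^{1}$, hence is the identity. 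Thus $H=\Psi$ is M\"obius. (Alternatively, one may bootstrap $H$ to real-analytic from the cohomological equation $\log|H'|\circ\gamma-\log|H'|=\log|\phi(\gamma)'\circ H|-\log|\gamma'|$, and then observe that its Schwarzian satisfies $S(H)=\bigl(S(H)\circ f_{X}\bigr)(f_{X}')^{2}$, which forces $S(H)\equiv 0$ by iterating along dense forward orbits and using $|(f_{X}^{n})'|\to\infty$; so $H$ is M\"obius.)
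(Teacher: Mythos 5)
Your Stage 1 and the general skeleton (exploit the conjugacy $f_Y\circ H=H\circ f_X$ of the piecewise-M\"obius expanding Markov maps, upgrade one-point differentiability to $C^1$, then conclude rigidity) are in the spirit of the paper, but the step you yourself flag as ``the main obstacle'' --- Stage 2 --- is exactly where the real content lies, and your sketch of it does not work as stated. The distortion bound (\ref{distortion1}) controls the ratio $(f_X^n)'(u)/(f_X^n)'(v)$ only for $u,v$ in the \emph{same} depth-$n$ Markov interval, i.e.\ within one inverse branch at one depth. Two nearby points of the backward orbit $O^-(x_0)$ generically have different depths $n\neq n'$, and nothing in distortion plus quasisymmetry lets you compare $(f_X^n)'(y)/(f_Y^n)'(H(y))$ with $(f_X^{n'})'(y')/(f_Y^{n'})'(H(y'))$; reducing the comparison to the common ancestor brings you back to comparing $H'$ at $x_0$ with $H'$ at a nearby preimage of $x_0$, which is the original problem. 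So uniform continuity and two-sided bounds for $H'$ on $O^-(x_0)$ do not follow from the ingredients you list, and a generic quasisymmetric conjugacy shows that some genuinely new input is needed.

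The paper supplies that input in two measure-theoretic moves you are missing. First, it blows up along the \emph{forward} orbit of $x_0$: writing $H=(f_Y^n\circ\beta_n^{-1})\circ(\beta_n\circ H\circ\alpha_n^{-1})\circ(\alpha_n\circ f_X^{-n})$ on intervals $I_n$ of fixed length $a$, the Koebe-type distortion bounds (\ref{distortion2})--(\ref{distortion3}) make the outer factors uniformly bi-Lipschitz while differentiability at $x_0$ makes the middle factor converge to a linear map; compactness of $S^1$ then yields an interval $I$ of definite length on which $H$ is bi-Lipschitz. Second, on $I$ it takes a Lebesgue density point $y_0$ of a set $E_0$ on which $H'$ exists, is nonzero, and is continuous relative to $E_0$, and pushes $E_0$ forward by suitable iterates $f_X^{n_k}$ (again with bounded distortion) to obtain a \emph{full-measure} set $E$ in a fixed interval $I_0$ on which $H'$ is uniformly continuous; absolute continuity then gives $H\in C^1$ on $I_0$, and transitivity of $f_X$ spreads this to all of $S^1$. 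Your Stage 3 is a legitimate, if heavier, alternative ending: matching multipliers at periodic points plus marked-length-spectrum rigidity of Fuchsian representations does the job, whereas the paper finishes more elementarily by normalizing one hyperbolic element to $x\mapsto\lambda x$ (the multiplier is preserved by a $C^1$ conjugacy) and reading off $H(x)=ax$ from $H(\lambda x)=\lambda H(x)$. But without an honest replacement for the bi-Lipschitz/density-point argument, the proposal has a genuine gap at its central step.
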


\begin{proof}
Since $H$ is the boundary correspondence for $\phi: \Gamma_{X}\to
\Gamma_{Y}$, we have that
$$
H\circ \gamma(x)=\phi(\gamma)\circ H(x)
$$
for any $\gamma\in \Gamma_{X}$ and $x\in S^{1}$.

Consider a marked Riemann surface $(X, h_{X})$ by $X_{0}={\mathbb
D}/\Gamma_{0}$. Then $h_{X}$ induces an isomorphism $\phi_{X}:
\Gamma_{0}\to \Gamma_{X}$. Consider the isomorphism
$\phi_{Y}=\phi\circ \phi_{X}: \Gamma_{0}\to \Gamma_{Y}$ and its
boundary correspondence $H_{Y}: S^{1}\to S^{1}$, that is
\begin{equation}~\label{bc}
H_{Y}\circ \gamma (x) = \phi_{Y} (\gamma) \circ H_{Y}(x)
\end{equation}
for any $\gamma\in \Gamma_{0}$ and $x\in S^{1}$. Then $H_{Y}$ can
be extended to ${\mathbb D}$ still satisfying the above
equation~(\ref{bc}) (this extension may not be unique but the
boundary correspondence is unique). So it induces a quasiconformal
homeomorphism $h_{Y}: X_{0}\to Y$. Thus we get a marked Riemann
surface $(Y, h_{Y})$. Suppose $f_{X}$ and $f_{Y}$ are the
transitive expanding Markov maps corresponding to $(X,h_{X})$ and
$(Y,h_{Y})$. Then
$$
f_{Y} \circ H= H\circ f_X
$$
on $S^{1}$.

If $H$ is a M\"obius transformation, since it is a diffeomorphism
of $S^{1}$,  $H'(x)\neq 0$. This is the ``only if" part.

To prove the ``if" part, suppose $H$ is differentiable at $x_{0}$
with $H'(x_{0})>0$. Let $\{\eta_{n,X}\}_{n=0}^{\infty}$ be the
sequence of Markov partitions for $f_{X}$. Then there is a
sequence of nested intervals $I_{w_{n}}\in \eta_{n,X}$ such that
$$
x_{0}\in \cdots \subset I_{w_{n}}\subset I_{w_{n-1}}\subset \cdots
I_{w_{1}}.
$$
Without loss of generality, we assume that $x_{0}$ is an interior
point of $I_{w_{n}}$ for all $n\geq 0$.

Suppose $H$ is differentiable at a point $x_0$ on the circle. Then
$$
H(x) = H(x_0 ) + H' (x_0 )(x - x_0 ) + o(|x- x_0 |)
$$
for $x$ close to $x_0$.

Consider $\{x_n = f_{X}^n(x_0)\}_{n=0}^{\infty}.$
Let $0 < a < 1$ be a real number. Consider
the interval $I_n = (x_n , x_n + a).$ Let $J_n = (x_0 , z_n )$ be an interval such
that
$$
f_{X}^n : J_n \rightarrow I_n$$
is a $C^{2}$ diffeomorphism.
Let $f_{X}^{-n} : I_n  \rightarrow J_n$ denote its inverse.
Since $f_{X}$ is expanding, the length $|J_n| \rightarrow 0$ as $n \rightarrow \infty.$
Similarly, we have
$$
f_{Y}^n : H(J_n ) \rightarrow  H(I_n )
$$
is a $C^{2}$ diffeomorphism. Let $f_{Y}^{-n} : H(I_n ) \rightarrow H(J_n )$ be its inverse. Then
$$
H(x) = f_{Y}^n \circ H \circ f_{X}^{-n}(x),  \ \ \ x \in I_n.$$
Let
$$
\alpha_n (x) = \frac{x-x_0}{x_n-x_0}: J_n
\rightarrow (0, 1)
$$
and
$$
\beta_n (x) = \frac{x -H(x_0 )}{H(x_n )-H(x_0)}: H(J_n )
\rightarrow (0, 1).
$$
Then
$$
H(x) = (f_{Y}^n \circ \beta_n^{-1}) \circ (\beta_n \circ  H \circ \alpha_n^{-1})
\circ (\alpha_n \circ f_{X}^{-n})(x), \;\;\;x \in  I_n.
$$
The key estimate comes from the following distortion result (the proof is similar to the proof of Lemma~\ref{boundedgeometry} or refer to~\cite[Chapter 1]{Jiang6}):
There is a constant $C > 0$ independent of $n$ and any inverse branches of $f_{X}^n$ and $f_{Y}^n$ such that
\begin{equation}~\label{distortion2}
\left| \log |\frac{(f_{X}^{-n})'(x))}{(f_{X}^{-n})'(y))}\ |\right|
\leq
C |x-y|,  {\rm \ for \ all \ }  x {\rm \ and \ }  y {\rm \ in \ }  I_n
\end{equation}
and
\begin{equation}~\label{distortion3}
\left| \log |\frac{(f_{Y}^{-n})'(x))}{(f_{Y}^{-n})'(y))}\ |\right|
\leq
C |x-y|,  {\rm \ for \ all \ }  x {\rm \ and \ }  y {\rm \ in \ }  H(I_n).
\end{equation}

From this distortion property, one can conclude that $f_{Y}^n
\circ \beta_n^{-1}$ and
$\alpha_n  \circ f_{X}^{-n}$  are sequences of bi-Lipschitz
homeomorphisms with a uniform
Lipschitz constant. Therefore, they have convergent subsequences. Without loss of generality, let us assume that these two sequences themselves are convergent. The map $\beta_n \circ  H \circ \alpha_n^{-1}$ converges to a linear map.

Since the unit circle is compact and all $I_n$ have fixed length $a$, there is a subsequence $I_{n_{i}}$ of intervals such that $\cap_{i=1}^{\infty} I_{n_{i}}$ contains an interval
$I$ of positive length. Without loss of generality, let us assume that $\cap_{n=1}^{\infty} I_n$ contains an interval
$I$ of positive length. Thus $H$ is a bi-Lipschitz homeomorphism on $I$.

Since $H|I$ is bi-Lipschitz, $H'$ exists a.e. in $I$ and is
integrable. Since $(H|I)'(x)$ is measurable and $H|I$ is a
homeomorphism, we can find a point $y_{0}$ in $I$ and a subset
$E_{0}$ containing $y_{0}$ such that
\begin{itemize}
\item[{\rm 1)}] $H|I$ is differentiable at every point in $E_{0}$;
\item[{\rm 2)}] $y_{0}$ is a density point of $E_{0}$;
\item[{\rm 3)}] $H'(y_{0})\neq 0$; and
\item[{\rm 4)}] the derivative $H'|E_{0}$ is
continuous at $y_{0}$.
\item[{\rm 5)}] $y_{0}$ is not an endpoint of an interval in
$\eta_{n,X}$ for all $n\geq 0$.
\end{itemize}

Since $S^{1}$ is compact, there is a subsequence $\{f_{X}^{
n_{k}}(y_{0})\}_{k=1}^{\infty}$ converging to a point $z_{0}$ in
$S^{1}$. Let $I_{0}=(b,c)$ be an open interval such that
$z_{0}\in \overline{I}_{0}$. There is a sequence of interval
$\{I_{k}\}_{k=1}^{\infty}$ such that $y_{0}\in
 I_{k}\subseteq I$ and $f_{X}^{ n_{k}}: I_{k}\rightarrow I_{0}$ is a
$C^{2}$ diffeomorphism. Then $|I_{k}|$ goes to zero as $k$ tends
to infinity.

From the distortion property~(\ref{distortion1}), there is a
constant $C_{4}>0$, such that
$$
\Big| \log \Big( \frac{|(f_{X}^{ n_{k}})'(w)|}{|(f_{X}^{
 n_{k}})'(z)|}\Big) \Big|\leq  C_{4}, \quad \forall w, z\in I_{k},\;\; \forall  k\geq 1.
$$

Since $y_{0}$ is a density point of $E_{0}$, for any integer
$s>0$, there is an integer $k_{s}>0$ such that
$$ \frac{|E_{0}\cap
I_{k}|}{|I_{k}|} \geq 1-\frac{1}{s}, \quad \forall k\geq k_{s}.
$$
Let $E_{k}=f_{X}^{n_{k}}(E_{0}\cap I_{k})$. Then $H$ is
differentiable at every point in $E_{k}$ and, from the distortion
property~(\ref{distortion1}), there is a constant $C_{5}>0$ such
that
$$
\frac{|E_{k}\cap I_{0}|}{|I_{0}|} \geq 1-\frac{C_{5}}{s}, \quad
\forall k\geq k_{s}.
$$
Let
$$
E=\cap_{s=1}^{\infty}\cup_{k\geq k_{s}} E_{k}.
$$
Then $E$ has full measure in $I_{0}$ and $H$ is differentiable at
every point in $E$ with non-zero derivative.

Next, we are going to prove that $H'|E$ is uniformly continuous.
For any $x$ and $y$ in $E$, let $z_{k}$ and $w_{k}$ be the
preimages of $x$ and $y$ under the diffeomorphism
$f_{X}^{n_{k}}:I_{k}\rightarrow I_{0}$. Then $z_{k}$ and $w_{k}$
are in $E_{0}$. From $ H\circ f_{X}=f_{Y}\circ H$, we have that
$$
H'(x) = \frac{(f_{Y}^{ n_{k}})'(H(z_{k}))}{(f_{X}^{
n_{k}})'(z_{k})}H'(z_{k})
$$
and
$$
H'(y) = \frac{(f_{Y}^{
n_{k}})'(H(w_{k}))}{(f_{X}^{n_{k}})'(w_{k})}H'(w_{k}).
$$
So
$$
\Big| \log \Big( \frac{H'(x)}{H'(y)}\Big) \Big| \leq \Big| \log
\Big| \frac{(f_{Y}^{ n_{k}})'(H(z_{k}))}{ (f_{Y}^{
n_{k}})'(H(w_{k}))}\Big| \Big| + \Big| \log \Big| \frac{(f_{X}^{
n_{k}})'(w_{k})}{(f_{X}^{n_{k}})'(z_{k})}\Big| \Big| + \Big| \log
\Big( \frac{H'(z_{k})}{H'(w_{k})}\Big) \Big|.
$$

Since $f_{X}$ and $f_{Y}$ are both piecewise $C^{2}$. From the
distortion property~(\ref{distortion2}) and~(\ref{distortion3}), there is a constant
$C_{6}>0$ such that
$$\Bigl|
\log \Big| \frac{(f_{X}^{ n_{k}})'(w_{k})}{(f_{X}^{
n_{k}})'(z_{k})}\Big| \Bigr| \leq C_{6} |x-y|
$$
and
$$
\Bigl| \log \Big| \frac{(f_{Y}^{
n_{k}})'(H(z_{k}))}{(f_{Y}^{n_{k}})'(H(w_{k}))}\Big| \Bigr| \leq
C_{6} |H(x)-H(y)| $$ for all $k\geq 1$. Therefore,
$$
\Big| \log \Big( \frac{H'(x)}{H'(y)} \Big) \Big| \leq
C_{6}\Big(|x-y| +|H(x)- H(y)|\Big) +\Big| \log \Big(
 \frac{H'(z_{k})}{H'(w_{k})} \Big) \Big|
$$
for all $k\geq 1$. Since $H'|E_{0}$ is continuous at $y_{0}$, the
last term in the last inequality tends to zero as $k$ goes to
infinity. Hence
$$ \Big| \log \Big( \frac{H'(x)}{H'(y)} \Big)
\Big| \leq C_{6}\Big( |x-y| +|H(x)- H(y)|\Big).
$$
This means that $H'|E$ is uniformly continuous. So it can be
extended to a continuous function $\phi$ on $I_{0}$. Because
$H|I_{0}$ is absolutely continuous and $E$ has full measure,
$$ H(x)
=H(a)+\int_{a}^{x}H'(x)dx =H(a) +\int_{a}^{x}\phi(x)dx
$$
on $I_{0}$. This implies that $H|I_{0}$ is actually $C^{1}$.
(This, furthermore, implies that $H|I_{0}$ is $C^{2}$).

Now for any $x\in S^{1}$, let $J$ be an open interval about $x$.
By the expansive and transitivity properties of $f_X$, there is an
integer $n>0$ and an open interval $J_{0}\subset I_{0}$ such that
$f_{X}^{ n}: J_{0}\to J$ is a $C^{1}$ diffeomorphism. By the
equation $H\circ f_{X}=f_{Y}\circ H$, we have that $H|J$ is
$C^{1}$. Therefore, $H$ is $C^{1}$.

Since $H: S^{1}\to S^{1}$ is the boundary correspondence for
$\phi_{XY}: \Gamma_{X}\to \Gamma_{Y}$, we have that
$$
H\circ \gamma (x) =\phi_{XY} (\gamma)\circ H (x)
$$
for all $\gamma\in \Gamma_{X}$ and $x\in S^{1}$. By composition
and post-composition M\"obius transformations, we can assume that
$\gamma (x) =\lambda x$ and $\phi_{XY}(\gamma) (x) =\lambda x$
($C^{1}$-diffeomorphism preserves the eigenvalue at a periodic
point), then we can get that $H(x) =ax$. Thus $H$ is a M\"obius
transformation. We proved the theorem.
\end{proof}

\section{\bf Teichm\"uller space represented by the space of functions}

Let $T(X_{0})$ be the Teichm\"uller space of $X_{0}$. It is the
space of all equivalence classes $\tau=[(X,h_{X})]$ of all marked
Riemann surfaces $(X, h_{X})$ by $X_{0}$. Let
$$
{\mathcal F}=\{ S_{X}\}
$$
be the space of all scaling functions. The following result is a
consequence of Theorem~\ref{Ji1} now.

\vspace*{10pt}
\begin{theorem}~\label{samesf}
Any marked Riemann surfaces $(X,h_{X})$ and $(Y, h_{Y})$ by
$X_{0}$ are in a same point $\tau\in T(X_{0})$ if and only if they
have the same scaling functions, that is, $S_{X}=S_{Y}$.
\end{theorem}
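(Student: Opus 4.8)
The plan is to unwind the definition of $T(X_{0})$ and then quote Theorem~\ref{Ji1}. By definition, the marked Riemann surfaces $(X,h_{X})$ and $(Y,h_{Y})$ represent the same point $\tau\in T(X_{0})$ precisely when they are equivalent as marked Riemann surfaces, i.e. when there is a conformal isomorphism $\alpha:X\to Y$ for which $f=h_{Y}^{-1}\circ\alpha\circ h_{X}:X_{0}\to X_{0}$ is isotopic to the identity. So the whole content of the theorem is that this condition on the pair coincides with the condition $S_{X}=S_{Y}$.

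The one step requiring care is reconciling "isotopic to the identity'' with the hypothesis of Theorem~\ref{Ji1}, which is stated in terms of $f$ being \emph{homotopic} to the identity. First I would observe that $f=h_{Y}^{-1}\circ\alpha\circ h_{X}$ is quasiconformal, being a composition of quasiconformal maps with a conformal one, so Theorem~\ref{EarleMcMullen} applies to it. Since $X_{0}$ is closed, its ideal boundary is empty, and Theorem~\ref{EarleMcMullen} then gives that for such an $f$ the properties "isotopic to the identity (rel the empty ideal boundary)'', "homotopic to the identity'', and "admits a lift $F:\mathbb{D}\to\mathbb{D}$ restricting to the identity on $S^{1}$'' are all equivalent. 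In particular the existence of a conformal $\alpha$ with $f$ isotopic to the identity is the same as the existence of a conformal $\alpha$ with $f$ homotopic to the identity.

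With this identification, "$(X,h_{X})$ and $(Y,h_{Y})$ give the same $\tau\in T(X_{0})$'' is, word for word, the left-hand condition in Theorem~\ref{Ji1}, which that theorem asserts is equivalent to $S_{X}=S_{Y}$. This proves both implications simultaneously. I do not expect any real obstacle here: the argument is purely a matter of matching definitions, and the only genuine points to check are the homotopy/isotopy dictionary supplied by Theorem~\ref{EarleMcMullen} and the (routine) fact that the comparison map $f$ is quasiconformal so that theorem applies.
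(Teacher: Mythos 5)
Your argument is essentially the paper's own proof: the paper also disposes of Theorem~\ref{samesf} in one line by unwinding the definition of equivalence of marked Riemann surfaces and invoking Theorem~\ref{Ji1}. Your additional step of reconciling ``isotopic'' with ``homotopic'' via Theorem~\ref{EarleMcMullen} is a correct (and slightly more careful) treatment of a point the paper passes over silently, but it does not change the route.
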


\begin{proof}
Since $(X,h_{X}), (Y, h_{Y})\in \tau$ if and only if there is a
conformal map $\alpha: X\to Y$ such that $h= h_{Y}^{-1}\circ
\alpha \circ h_{X}: X_{0}\to X_{0}$ is homotopic to identity.
\end{proof}

Thus we can denote $S_{\tau}=S_{X}$ for any $(X,h_{X})\in \tau\in
T(X_{0})$ and introduce a bijective map from the Teichm\"uller
space $T(X_{0})$ to the function space ${\mathcal F}$,
$$
\iota : T(X_{0})\to {\mathcal F}; \quad \iota (\tau) =S_{\tau},
\quad \tau\in T(X_{0}).
$$

Now suppose $R$ is any closed Riemann surface of genus $g$. Let
$h_{0}: X_{0}\to R$ be a quasiconformal homeomorphism. For any
marked Riemann surface $(X,h_{X})$ by $R$, we have a marked
Riemann surface $(X, h_{X}\circ h_{0})$ by $X_{0}$. For any two
marked Riemann surfaces $(X,h_{X})$ and $(Y,h_{Y})$ by $R$, if
there is a conformal map $\alpha: X\to Y$ such that
$$
h_{Y}^{-1}\circ \alpha\circ h_{X}: R\to R
$$
is homotopic to the identity, then
$$
h_{0}^{-1}\circ h_{Y}^{-1}\circ \alpha\circ
h_{X}\circ h_{0}: X_{0}\to X_{0}
$$
is also homotopic to the identity. Thus this gives a bijective map
$\vartheta: T(R)\to T(X_{0})$. Therefore, we have a bijective map
$$
\iota_{R}= \iota\circ \vartheta: T(R)\to {\mathcal F}.
$$

\section{\bf Bers' embedding for ${\mathcal F}$.}

Let $R={\mathbb D}/\Gamma$ be a closed hyperbolic Riemann surface
of genus $g\geq 2$. The group $\Gamma$ acts on the whole Riemann
sphere ${\mathbb P}^{1}$. The limit set of $\Gamma$ is just the
unit disk $S^{1}$. The quotient
$$
R^{*} = ({\mathbb P}^{1}\setminus \overline{{\mathbb D}})/\Gamma
$$
of the outer of the closed unit disk is another closed hyperbolic
Riemann surface complex conjugate to $R$. Let $T(R)$ be the
Teichm\"uller space of $R$. Suppose $\tau=[(X,h_{X})]\in T(R)$ and
$X={\mathbb D}/\Gamma_{X}$. Then $h_X$ can be lift to a
quasiconformal homeomorphism $H: {\mathbb D}\to {\mathbb D}$
conjugating the Fuchsian group $\Gamma$ and $\Gamma_{X}$. Let
$\mu_{H}=H_{\overline{z}}/H_{z}$ be the Belrtami coefficient of
$H$. Extend it to the whole Riemann sphere ${\mathbb P}^{1}$ by
$$
\mu (z)   =\left\{
\begin{array}{ll}
       \mu_{H}(z), & z\in {\mathbb D}\cr
       0,          & z\in {\mathbb D}^{*}= {\mathbb P}^{1}\setminus \overline{\mathbb
       D}.
\end{array}\right.
$$
Let $\Phi(z): {\mathbb P}^{1}\to {\mathbb P}^{1}$ be the
normalized solution to the Beltrami equation
$$
\Phi_{\overline{z}} =\mu (z) \Phi_{z}.
$$
Since $\mu$ is $\Gamma$-invariant, we have that
$$
\gamma_{*}(\mu_{\Phi}) (z) =\mu_{\Phi}(z)
$$
for all $\gamma\in \Gamma$. Since the solution of the Beltrami
equation is unique up to post-composition with a M\"obius
transformation, so there is an isomorphism
$$
\phi: \Gamma\to \Gamma_{X}
$$
such that
$$
\Phi \circ \gamma =\phi(\gamma)\circ \Phi
$$
for all $\gamma\in \Gamma$. Thus $\Phi$ conjugates $\Gamma$ to
$\Gamma_{X}$. By the construction of $\Phi$, it is conformal in
${\mathbb D}^{*}$. This implies that
$$
R^{*} = \Phi ({\mathbb D}^{*})/\Gamma_{X} \quad \hbox{and}\quad
X=\Phi ({\mathbb D})/\Gamma_{X}.
$$
The Schwarzian derivative
$$
s(\Phi)(z) = \Big(\frac{\Phi'''(z)}{\Phi'(z)} -\frac{3}{2}
\Big(\frac{\Phi''(z)}{\Phi'(z)}\Big)^{2}\Big) dz^{2}, \quad z\in
{\mathbb D}^{*},
$$
is $\Gamma$-invariant. It induces a quadratic differential on
$R^{*}$ and is independent of the choice of $(X,h_{X})\in \tau$.
Thus we get a quadratic differential $s(\tau)$ on $R^{*}$. Let
$Q(R^{*})$ be the space of all quadratic differentials $q=q(z)
dz^{2}$ on $R^{*}$ with the norm
$$
||q|| = \sup_{z\in R^{*}} (|q(z)| \rho^{-2}(z)).
$$
where $\rho$ is the hyperbolic metric on $S^{*}$. Then it is a
complex dimension $3g-3$ linear space. Then we can embed $T(R)$
into $Q(R^{*})$ by
$$
s(\tau): T(R) \to Q(R^{*}).
$$
It is called Bers's embedding. So we can think $T(R)$ as an open
domain of $Q(R^{*})$.

Now take $R=X_{0}$ and consider
$$
s\circ \iota^{-1}: {\mathcal F}\to Q(R^{*}).
$$
Then we can embed ${\mathcal F}$ into a complex manifold
$Q(R^{*})$. Let $B(1/2)$ and $B(3/2)$ be the balls of radii $1/2$
and $3/2$ in $Q(R^{*})$. Then we have that
$$
B(1/2)\subset s(\iota^{-1}({\mathcal F})) \subset B(3/2).
$$

\section{\bf Teichm\"uller metric and maximum metric.}

The Teichm\"uller metric on $T(R)$ is defined as follows. Given
any two points $\tau, \tau'\in T(R)$. Let $(X,h_{X})\in \tau$ and
$(Y, h_{Y})\in \tau'$. Then $h_{XY}=h_{Y}\circ h_{X}^{-1}: X\to Y$
is a quasiconformal homeomorphism. Define
$$
d_{T} (\tau, \tau') =\inf \frac{1}{2} \inf \{ \log K(h)\;|\; h:
X\to Y \hbox{ is homotopic to $h_{XY}$}\}
$$
where the first $inf$ takes over all marked Riemann surfaces
$(X,h_{X})\in \tau$ and $(Y,h_{Y})\in \tau'$ by $R$ and where
$$
K(h) =\sup_{z\in X}
\frac{|h_{z}|+|h_{\overline{z}}|}{|h_{z}|-|h_{\overline{z}}|}.
$$
Since the map $\vartheta: T(R)\to T(X_{0})$ is an isometry, we
only need to consider the case $R=X_{0}$ for the purpose of the
study of metrics.

Suppose $R=X_{0}$. Since $\iota: T(R)\to {\mathcal F}$ is one-to-one and onto, we
can define the Teichm\"uller metric on ${\mathcal F}$ as
$$
d_{T}(S, S') = d_{T}(\iota^{-1}(S), \iota^{-1}(S')).
$$

Since $S=\iota(\tau)$ is a function on $\Sigma_{A}^{*}$, it has a
natural maximum norm
$$
||S|| =\sup_{w^{*}\in \Sigma_{A}^{*}} |S(w^{*})|.
$$
The distance is defined as
$$
d(S, S') =||S-S'||.
$$
This introduces a metric $d_{max}(\cdot, \cdot)$ on the
Teichm\"uller space $T(R)$,
$$
d_{max} (\tau, \tau') = d (\iota (\tau), \iota (\tau'))
$$
for any $\tau, \tau'\in T(R)$.

\vspace*{10pt}
\begin{theorem}~\label{finermetric}
The identity map
$$
id_{TM}: (T(R), d_{T}) \to (T(R), d_{max})
$$
is uniformly continuous.
\end{theorem}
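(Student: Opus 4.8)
\emph{Proof proposal.} Since $\vartheta\colon T(R)\to T(X_{0})$ is an isometry we may assume $R=X_{0}$. The plan is to produce a function $\Omega\colon[1,\infty)\to[0,\infty)$ with $\Omega(K)\to 0$ as $K\to 1^{+}$ such that
$$
d_{max}(\tau,\tau')\le\Omega\!\left(e^{2d_{T}(\tau,\tau')}\right)\qquad\text{for all }\tau,\tau'\in T(X_{0});
$$
uniform continuity of $id_{TM}$ then follows at once by choosing, given $\varepsilon>0$, a $\delta>0$ with $\Omega(e^{2\delta})<\varepsilon$.

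The first step is the reduction. Fix $\tau=[(X,h_{X})]$ and $\tau'=[(Y,h_{Y})]$ and, for $\kappa>0$, choose representatives together with a quasiconformal $h\colon X\to Y$ homotopic to $h_{Y}\circ h_{X}^{-1}$ with $\tfrac12\log K(h)<d_{T}(\tau,\tau')+\kappa$; put $K=K(h)$. Lifting $h$ to the disk gives a $K$-quasiconformal $H\colon{\mathbb D}\to{\mathbb D}$, and by Theorem~\ref{EarleMcMullen} applied to the self-map $h_{X}\circ h_{Y}^{-1}\circ h$ of $X$ (which is homotopic to $\mathrm{id}_{X}$) the lift may be chosen so that $H|_{S^{1}}=H_{Y}\circ H_{X}^{-1}$, where $H_{X},H_{Y}$ are the boundary correspondences attached to $h_{X},h_{Y}$. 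Then, by the same argument as in the proof of Theorem~\ref{Ji1}, $H$ conjugates $f_{X}$ to $f_{Y}$ on $S^{1}$ and carries each interval $I_{w_{n}^{*},X}$ of $\eta_{n,X}$ onto the interval $I_{w_{n}^{*},Y}$ of $\eta_{n,Y}$ with the same label. Hence
$$
S_{Y}(w^{*})=\lim_{n\to\infty}\frac{|H(I_{w_{n}^{*},X})|}{|H(I_{\sigma^{*}_{A}(w_{n}^{*}),X})|},\qquad
S_{X}(w^{*})=\lim_{n\to\infty}\frac{|I_{w_{n}^{*},X}|}{|I_{\sigma^{*}_{A}(w_{n}^{*}),X}|},
$$
so the theorem reduces to bounding, uniformly in $w^{*}$ and in $X,Y$, the amount by which the $K$-quasiconformal map $H$ changes these limiting ratios of nested boundary arcs.

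The key estimate I would assemble from two ingredients. First, because the Markov partitions have fixed finite combinatorics and $f_{X}$ is piecewise M\"obius, the restriction of an iterate $f_{X}^{m}$ to an interval of $\eta_{m}$ coincides with a single M\"obius transformation of $\Gamma_{X}$; thus the nested configuration $I_{w_{n}^{*},X}\subset I_{\sigma^{*}_{A}(w_{n}^{*}),X}$, together with its combinatorial neighbours, is for each $n$ the M\"obius image (by an element of $\Gamma_{X}$) of one of the configurations $J'\subset J$ with $J'\in\eta_{1,X}$, $J\in\eta_{0,X}$, and the corresponding $Y$-configuration is exactly the $H$-image of the $X$-configuration. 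This lets one phrase everything through M\"obius-invariant cross-ratios. Second, a $K$-quasiconformal self-map of ${\mathbb D}$ distorts cross-ratios of points of $S^{1}$ by an amount tending to $0$ as $K\to 1$, \emph{uniformly}: when the (suitably normalized) cross-ratio lies in a fixed compact subinterval of $(0,1)$ one spreads three of the four points to a fixed position by a M\"obius map, fixes them, and invokes the continuous $L^{\infty}$-dependence of the normalized solution of the Beltrami equation on its coefficient; when the cross-ratio is near $0$ or $1$ one uses instead that the modulus of the associated quadrilateral changes by at most the factor $K$, which — because $t\,|\log t|$ is bounded on $(0,1)$ — keeps the additive change of the cross-ratio small. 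Combining the two, and recovering the limiting shape of the nested arcs (up to an affine map) from their cross-ratios, would yield $|S_{X}(w^{*})-S_{Y}(w^{*})|\le\Omega(K)$ for every $w^{*}$, hence $d_{max}(\tau,\tau')=\sup_{w^{*}}|S_{X}(w^{*})-S_{Y}(w^{*})|\le\Omega(K)$; letting $\kappa\to 0$ gives the desired inequality.

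I expect the main obstacle to be precisely the uniformity over the whole (non-compact) Teichm\"uller space. The distortion constant $B$ of $f_{X}$ and, more generally, the geometry of the partitions $\eta_{n,X}$ degenerate as $X$ leaves every compact subset of $T(X_{0})$, so one cannot afford to compare $S_{X}$ with $S_{Y}$ through the self-distortion of $f_{X}$ or $f_{Y}$, nor through any a priori lower bound on the values of $S_{X}$. The argument must rest only on the near-conformality of $H$ itself, on M\"obius-invariant quantities, and on the fixed combinatorial type of the Markov structure. In particular, the step of converting the cross-ratio estimate into a bound on $|S_{X}(w^{*})-S_{Y}(w^{*})|$ without ever invoking $X$-dependent scales, and the bookkeeping of the finitely many degenerate cases (an interval of $\eta_{n}$ abutting the boundary of its parent, which forces one to pass to a combinatorially bounded ancestor to restore a non-degenerate configuration), are where the real technical work lies.
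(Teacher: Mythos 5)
Your first step (reduce to $R=X_{0}$, choose a nearly extremal $K$-quasiconformal $h$, lift it so that $H|_{S^{1}}$ is the boundary correspondence, and observe that $H$ conjugates $f_{X}$ to $f_{Y}$ and sends $I_{w_{n}^{*},X}$ to $I_{w_{n}^{*},Y}$) is exactly the paper's reduction. The gap is in your key estimate. You propose to transport the level-$n$ configuration back to a level-one-in-level-zero configuration by an element $\gamma\in\Gamma_{X}$, and to "recover the limiting shape of the nested arcs (up to an affine map) from their cross-ratios." This cannot work, for a dimension reason: $m$ marked points on $S^{1}$ carry $m-3$ M\"obius invariants but $m-2$ affine invariants, so cross-ratios (of the four endpoints, or of any finite set of combinatorial neighbours) determine the configuration only up to M\"obius, which is always one parameter short of determining the length ratio. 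Concretely, by M\"obius invariance the cross-ratio of the endpoints of $I_{w_{n}^{*},X}\subset I_{\sigma^{*}_{A}(w_{n}^{*}),X}$ is \emph{constant in $n$} (it equals one of finitely many level-one base values, independent of the tail of $w^{*}$), whereas $S_{X}(w_{n}^{*})$ converges to a limit that genuinely depends on the whole tail; the tail dependence lives entirely in the ratio of derivatives of $\gamma^{-1}$ across the base configuration, which is invisible to cross-ratios. Equivalently, transporting by $\gamma$ and using $H\circ\gamma=\phi(\gamma)\circ H$ just collapses the level-$n$ comparison to the level-one comparison. So the step you flag as "technical work" — converting cross-ratio control of $H$ into a bound on $|S_{X}(w^{*})-S_{Y}(w^{*})|$ — is not bookkeeping: as formulated, the M\"obius-invariant data is insufficient in principle, and attempts to fix it with mixed-scale cross-ratios run into the power-type ($cr\mapsto cr^{K}$) distortion of small cross-ratios, which is not multiplicatively close to the identity.

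The paper's proof abandons M\"obius invariance and works directly at small scale: Lemma~\ref{MK} shows the boundary map of a $K$-quasiconformal self-map of ${\mathbb D}$ is $(\epsilon(K),M(K))$-quasisymmetric with $M(K)\to 1^{+}$ as $K\to 1^{+}$, and Lemma~\ref{sd} shows an $M$-quasisymmetric self-map of $[0,1]$ fixing $0,1$ moves every point by at most $\zeta(M)\to 0$. For each fixed pair $\tau,\tau'$ and each $w^{*}$, once $|I_{\sigma^{*}_{A}(w_{n}^{*}),X}|\le\epsilon(K)$ one rescales that parent interval and its $H$-image linearly to $[0,1]$ and gets $|S_{X}(w_{n}^{*})-S_{Y}(w_{n}^{*})|\le\zeta(M(K))$, then lets $n\to\infty$; this yields $d_{max}(\tau,\tau')\le\zeta(M(\exp(2d_{T}(\tau,\tau'))))$, which is the uniform modulus $\Omega$ you were after. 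Note also that your worry about non-compactness is resolved automatically on this route: the threshold scale at which the estimate applies may depend on $X$, but since $S_{X}(w^{*})$ is a limit over shrinking intervals, no uniform bounded geometry of the partitions and no lower bound on $S_{X}$ are ever needed — only the two universal functions $\epsilon(K)$, $M(K)$, $\zeta(M)$ enter the final bound.
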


To prove this theorem, we need several lemmas. Weierstrass'
${\mathcal P}$-function is defined by
$$
{\mathcal P} (z) = \frac{1}{z^{2}} +\sum \Big(
\frac{1}{(z-m\omega_{1}-n\omega_{2})^{2}} -
\frac{1}{(m\omega_{1}+n\omega_{2})^{2}}\Big).
$$
It is a solution of the differential equation
$$
{\mathcal P}'(z)^{2} = 4 ({\mathcal P}(z)-e_{1}) ({\mathcal
P}(z)-e_{2}) ({\mathcal P}(z)-e_{3})
$$
where
$$
e_{1}= {\mathcal P}(\frac{\omega_{1}}{2}),\;\; e_{2}= {\mathcal
P}(\frac{\omega_{2}}{2}),\;\; e_{3}= {\mathcal
P}(\frac{\omega_{1}+\omega_{2}}{2}).
$$
They are distinct numbers.

Let $\kappa=\omega_{2}/\omega_{1}$ and consider only the
half-plane $\Im \kappa >0$. Then we have a function
$$
\rho (\kappa) = \frac{e_{3}-e_{1}}{e_{2}-e_{1}}.
$$

\vspace*{10pt}
\begin{lemma}~\label{halfvalue} $\rho (\kappa)\not= 0,1$ is an analytic function and
$$
\rho(i)= \frac{1}{2}.
$$
\end{lemma}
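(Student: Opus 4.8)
The plan is to verify the two assertions about $\rho(\kappa) = (e_3-e_1)/(e_2-e_1)$ separately: first that $\rho$ omits the values $0$ and $1$ and is analytic in the upper half-plane, and then that $\rho(i) = 1/2$. For the first part, I would recall that $e_1, e_2, e_3$ are the three roots of the cubic $4w^3 - g_2 w - g_3$, which are pairwise distinct because the associated elliptic curve is nonsingular (equivalently, the discriminant $g_2^3 - 27 g_2^2$ does not vanish). Distinctness of $e_1, e_2, e_3$ immediately gives $\rho \neq 0$ (since $e_3 \neq e_1$) and $\rho \neq 1$ (since $e_3 - e_1 \neq e_2 - e_1$, i.e. $e_3 \neq e_2$); it also shows $e_2 - e_1 \neq 0$ so the quotient is well-defined. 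Analyticity in $\kappa$ on $\Im\kappa > 0$ follows because each $e_j = \mathcal{P}(\cdot; 1, \kappa)$ depends holomorphically on $\kappa$ (the lattice sums converge locally uniformly, so $\omega_1 = 1$, $\omega_2 = \kappa$ gives holomorphic dependence), and the denominator never vanishes.

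For the evaluation $\rho(i) = 1/2$, the plan is to use the symmetry of the square lattice $\Lambda = \mathbb{Z} + i\mathbb{Z}$ under multiplication by $i$. Since $i\Lambda = \Lambda$, the homogeneity relation $\mathcal{P}(iz; i\Lambda) = i^{-2}\mathcal{P}(z;\Lambda) = -\mathcal{P}(z;\Lambda)$ holds. Taking $z = 1/2$ gives $\mathcal{P}(i/2) = -\mathcal{P}(1/2)$, that is $e_2 = -e_1$ (with $\omega_1 = 1$, $\omega_2 = i$, so $e_1 = \mathcal{P}(1/2)$, $e_2 = \mathcal{P}(i/2)$). Combined with the standard relation $e_1 + e_2 + e_3 = 0$ (the sum of the roots of $4w^3 - g_2 w - g_3$, which has no quadratic term), this forces $e_3 = 0$. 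Therefore
$$
\rho(i) = \frac{e_3 - e_1}{e_2 - e_1} = \frac{0 - e_1}{-e_1 - e_1} = \frac{-e_1}{-2e_1} = \frac{1}{2},
$$
using $e_1 \neq 0$ (which holds since the $e_j$ are distinct and sum to zero, so none can equal the common value forced by coincidence — concretely, if $e_1 = 0$ then $e_2 = 0$, contradicting distinctness).

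**The main obstacle** I anticipate is being careful about the normalization: one must fix the branch/labeling convention $e_1 = \mathcal{P}(\omega_1/2)$, $e_2 = \mathcal{P}(\omega_2/2)$, $e_3 = \mathcal{P}((\omega_1+\omega_2)/2)$ consistently, since permuting the $e_j$ changes $\rho$ by one of the six anharmonic-group transformations and only one such choice yields exactly $1/2$ at $\kappa = i$. Once the convention matches the displayed definitions in the excerpt, the computation is as above. A secondary point worth stating cleanly is the holomorphic dependence of $e_2, e_3$ on $\kappa$ near the boundary behavior, but since we only need analyticity on the open half-plane $\Im\kappa > 0$ this is routine from uniform convergence of the Eisenstein-type series defining $\mathcal{P}$ and hence $g_2(\kappa), g_3(\kappa)$, together with the fact that the roots of a cubic with distinct roots depend holomorphically on the coefficients.
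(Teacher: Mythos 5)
Your argument is correct, and it is in fact more than the paper offers: the paper gives no proof of this lemma at all, simply stating that the proofs of Lemmas~\ref{halfvalue} and~\ref{upper} can be found in Ahlfors' book~\cite{Ahlfors}. Your route is the classical one. The values $e_{1},e_{2},e_{3}$ are pairwise distinct (as the paper itself notes just before the lemma), which gives both that $\rho$ is well defined ($e_{2}\neq e_{1}$) and that $\rho\neq 0,1$ ($e_{3}\neq e_{1}$, $e_{3}\neq e_{2}$); holomorphic dependence of $e_{j}$ on $\kappa$ (normalizing $\omega_{1}=1$, $\omega_{2}=\kappa$, which is legitimate since the ratio $\rho$ is invariant under the homogeneity $e_{j}(\lambda\omega_{1},\lambda\omega_{2})=\lambda^{-2}e_{j}(\omega_{1},\omega_{2})$) gives analyticity. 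Your evaluation at $\kappa=i$ via the square-lattice symmetry $i\Lambda=\Lambda$, $\mathcal{P}(iz)=-\mathcal{P}(z)$, hence $e_{2}=-e_{1}$, together with $e_{1}+e_{2}+e_{3}=0$ forcing $e_{3}=0$, is exactly the standard computation; it is equivalent to evaluating the modular relation $\rho(-1/\kappa)=1-\rho(\kappa)$ at its fixed point $\kappa=i$, and your check that $e_{1}\neq 0$ closes the division step. One small slip to fix: the nonvanishing discriminant is $g_{2}^{3}-27g_{3}^{2}$, not $g_{2}^{3}-27g_{2}^{2}$ (and you could equally well cite the paper's own assertion that the $e_{j}$ are distinct, or the standard fact that $\mathcal{P}'$ has simple zeros at the half-periods, rather than invoking the discriminant). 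With that correction your write-up stands as a complete, self-contained proof of the lemma.
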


Suppose ${\mathbb H}$ is the upper half plane. It is another model
of the hyperbolic disk ${\mathbb D}$. Suppose $\Phi: {\mathbb
H}\to {\mathbb H}$ is a $K$-quasiconformal orientation-preserving
homeomorphism. Then it can be extended to a homeomorphism, which
we still denote as $\Phi$, of ${\mathbb H}\cup {\mathbb R}$. Let
$\psi: {\mathbb R}\to {\mathbb R}$ be the restriction of $\Phi$ to
the boundary of ${\mathbb H}$. Then $\psi$ is a quasisymmetric
homeomorphism and $\psi (-\infty)=-\infty$ and
$\phi(+\infty)=+\infty$. We have that

\vspace*{10pt}
\begin{lemma}~\label{upper}
$$
\lambda (K)^{-1}=\frac{1-\rho (iK)}{\rho(iK)} \leq \frac{|\psi
(x+t)-\psi(x)|}{|\psi (x)-\psi(x-t)|}\leq \lambda
(K)=\frac{\rho(iK)}{1-\rho (iK)}.
$$
\end{lemma}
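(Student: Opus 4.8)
The plan is to recognize the ratio in the statement as a cross-ratio of four boundary points, to read that cross-ratio off the conformal modulus of a suitable quadrilateral, and then to invoke the classical fact that a $K$-quasiconformal map changes the modulus of a quadrilateral by a factor at most $K$. The function $\rho$ of Lemma~\ref{halfvalue} is exactly the dictionary translating quadrilateral moduli into cross-ratios, and the identity $\rho(i)=1/2$ pins down the symmetric (``square'') quadrilateral, which is the configuration produced by the three evenly spaced points $x-t,\,x,\,x+t$.

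First I would normalize. Fix $x\in{\mathbb R}$ and $t>0$. Precomposing $\Phi$ with the conformal automorphism $z\mapsto x+tz$ of ${\mathbb H}$ and postcomposing with the real affine map $w\mapsto (w-\psi(x))/(\psi(x)-\psi(x-t))$ — both conformal, hence not affecting the maximal dilatation $K$ — produces a $K$-quasiconformal self-map of ${\mathbb H}$ which still fixes $\infty$ (because $\psi(\pm\infty)=\pm\infty$) and whose boundary values carry $-1,0,1$ to $-1,0,r$, where
$$
r=\frac{\psi(x+t)-\psi(x)}{\psi(x)-\psi(x-t)}>0 .
$$
It therefore suffices to bound $r$. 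For $s>0$ let $Q(s)$ be the quadrilateral consisting of ${\mathbb H}$ with the four distinguished boundary points $-1,0,s,\infty$ in this cyclic order and with the pair of opposite sides $[-1,0]$, $[s,\infty]$ singled out, and let $m(s)$ be its conformal modulus; after a M\"obius change of coordinate this is an honest Jordan quadrilateral, and $s\mapsto m(s)$ is an increasing homeomorphism of $(0,\infty)$ onto itself with $m(1)=1$. The normalized boundary map carries $Q(1)$ onto $Q(r)$, sending distinguished sides to distinguished sides, so the quasiconformal distortion of quadrilateral moduli gives $K^{-1}m(1)\le m(r)\le K\,m(1)$.

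Next I would identify the modulus with $\rho$. Doubling $Q(s)$ across ${\mathbb R}$ uniformizes to the torus ${\mathbb C}/({\mathbb Z}+\kappa{\mathbb Z})$ whose Weierstrass function is branched over the four marked points; the classical elliptic-integral computation behind Lemma~\ref{halfvalue} shows that $\kappa=i\,m(s)/m(1)$ is purely imaginary and that, after matching the branch points $e_{1},e_{2},e_{3}$ with $s,-1,0$ respectively,
$$
\rho(\kappa)=\frac{e_{3}-e_{1}}{e_{2}-e_{1}},\qquad s=\frac{\rho(\kappa)}{1-\rho(\kappa)} ,
$$
the symmetric quadrilateral $Q(1)$ being the square torus $\kappa=i$, consistent with $\rho(i)=1/2$ and $s=1$ by Lemma~\ref{halfvalue}. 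On the positive imaginary axis $\rho$ is real and increases monotonically from $1/2$ at $\kappa=i$ to $1$ as $\kappa\to i\infty$, and $s\mapsto\rho/(1-\rho)$ is increasing in $\rho$. Writing $\kappa=iu$, the bounds $K^{-1}m(1)\le m(r)\le K\,m(1)$ read $1/K\le u\le K$, hence $\rho(i/K)\le\rho(iu)\le\rho(iK)$, that is, $1-\rho(iK)\le\rho(\kappa)\le\rho(iK)$ by the modular relation $\rho(-1/\kappa)=1-\rho(\kappa)$ (which gives $\rho(i/K)=1-\rho(iK)$). Applying $x\mapsto x/(1-x)$ yields
$$
\lambda(K)^{-1}=\frac{1-\rho(iK)}{\rho(iK)}\le r\le\frac{\rho(iK)}{1-\rho(iK)}=\lambda(K),
$$
which is the asserted double inequality.

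The hard part will be making this dictionary genuinely precise: one must keep careful track of which boundary point is sent to which branch point $e_{j}$, so that $s$ comes out exactly as $\rho/(1-\rho)$ rather than as some M\"obius-equivalent rearrangement, and one must check that the chain ``quadrilateral modulus $\mapsto$ cross-ratio $\mapsto s$'' is monotone \emph{in the correct direction}, so that the upper bound $m(r)\le K\,m(1)$ delivers the upper bound $\lambda(K)$ on $r$ and not the lower one. The quasiconformal distortion inequality for quadrilateral moduli and the elliptic-integral description of $m(s)$ (its inverse being the elliptic modular function) are classical and would simply be quoted.
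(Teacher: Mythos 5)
Your argument is correct: the paper itself offers no proof of this lemma, deferring to Ahlfors' book, and what you have written out — normalize to the points $-1,0,1,\infty$, use the quasi-invariance of the modulus of the quadrilateral under a $K$-quasiconformal map, and translate the extremal modulus back into the ratio $r$ via the modular function $\rho$ of Lemma~\ref{halfvalue} together with $\rho(i)=\tfrac12$ and $\rho(i/K)=1-\rho(iK)$ — is precisely that classical argument. So your proposal matches the approach the paper relies on, with the only remaining work being the bookkeeping (orientation of vertices, monotonicity of $s\mapsto m(s)$) that you already flag and that is handled in the cited source.
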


The proofs of the above two lemmas can be found in Ahlfors'
book~\cite{Ahlfors}. The further estimation of $\lambda (K)$ is
that (refer to~\cite{Lehto})
\begin{equation}~\label{lambda}
\lambda (K) \leq e^{5(K-1)}.
\end{equation}

Let $\varphi: S^{1}\to S^{1}$ be an orientation preserving
homeomorphism. Suppose $\epsilon >0$ and $M>1$ are two constants.
We call it $(\epsilon, M)$-quasisymmetric if
$$
M^{-1} \leq \frac{|\varphi(x) -
\varphi(\frac{x+y}{2})|}{|\varphi(\frac{x+y}{2})-\varphi(y)|}\leq
M
$$
for any $x, y\in S^{1}$ and $|x-y|\leq \epsilon$, where $|\cdot|$
means the Lebesgue metric on $S^{1}$.

By considering the equality~(\ref{lambda}) or
Lemmas~\ref{halfvalue} and~\ref{upper} and $\rho (\kappa)$ is
continuous at $i$, we have that

\vspace*{10pt}
\begin{lemma}~\label{MK}
There are two bounded functions $\epsilon (K)>0$ and $M (K)> 1$
with $\epsilon (K) \to 0^{+}$ and $M (K) \to 1^{+}$ as $K\to
1^{+}$ such that if $H$ is a $K$-quasiconformal homeomorphism of
${\mathbb D}$, then $\varphi=H|S^{1}$ is a $(\epsilon(K),
M(K))$-quasisymmetric homeomorphism of $S^{1}$.
\end{lemma}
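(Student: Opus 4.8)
The plan is to import the upper--half--plane estimate of Lemma~\ref{upper} to $S^{1}$ by a Cayley change of variables, using that M\"obius transformations of $\mathbb{D}$ are hyperbolic isometries and therefore do not change the maximal dilatation $K$. Let $H\colon\mathbb{D}\to\mathbb{D}$ be $K$-quasiconformal and orientation preserving, set $\varphi=H|S^{1}$, and let $x,y\in S^{1}$ have arc distance $|x-y|=2s$ with arc--midpoint $m=\tfrac{x+y}{2}$. First I would choose a M\"obius map $A\colon\mathbb{D}\to\mathbb{H}$ sending $m$ to $0$ and the point $q$ antipodal to $m$ to $\infty$, normalised so that in the angle coordinate centred at $m$ one has $A|S^{1}\colon\theta\mapsto\tan(\theta/2)$; then $A$ carries $(x,m,y)$ to $(u,0,-u)$ with $u=\tan(s/2)=\tfrac{s}{2}(1+O(s^{2}))$, a genuine midpoint triple on $\mathbb{R}$. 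On the image side I would choose a M\"obius map $B\colon\mathbb{D}\to\mathbb{H}$ sending $H(q)$ to $\infty$; then $\Phi:=B\circ H\circ A^{-1}\colon\mathbb{H}\to\mathbb{H}$ is $K$-quasiconformal and fixes $\infty$, so $\psi:=\Phi|\mathbb{R}$ satisfies the hypotheses of Lemma~\ref{upper}.

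Applying Lemma~\ref{upper} to $\psi$ at $0$ with increment $u$ gives
$$
\lambda(K)^{-1}\le\frac{|\psi(u)-\psi(0)|}{|\psi(0)-\psi(-u)|}\le\lambda(K),\qquad \lambda(K)=\frac{\rho(iK)}{1-\rho(iK)}.
$$
Here $\psi(u)-\psi(0)=B(\varphi(x))-B(\varphi(m))$ and $\psi(0)-\psi(-u)=B(\varphi(m))-B(\varphi(y))$, and near $\varphi(m)$ the inverse $B^{-1}$ is a real--analytic diffeomorphism with non--vanishing derivative; so the mean value theorem applied to $B^{-1}$, together with $u=\tfrac{s}{2}(1+O(s^{2}))$ on the source side, converts the above into
$$
\lambda(K)^{-1}\bigl(1-cs\bigr)\le\frac{|\varphi(x)-\varphi(m)|}{|\varphi(m)-\varphi(y)|}\le\lambda(K)\bigl(1+cs\bigr),
$$
with all lengths in the Lebesgue metric on $S^{1}$ and $c$ a constant depending only on $K$ and bounded as $K\to1^{+}$. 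By Lemmas~\ref{halfvalue} and~\ref{upper}, $\rho(\kappa)$ is continuous at $\kappa=i$ with $\rho(i)=\tfrac12$, hence $\lambda(K)\to1^{+}$ as $K\to1^{+}$; the same follows from~(\ref{lambda}). It therefore suffices to choose any bounded $\epsilon(K)>0$ with $\epsilon(K)\to0^{+}$ as $K\to1^{+}$ and set $M(K)=\lambda(K)\bigl(1+c\,\epsilon(K)\bigr)$: these are bounded, $M(K)>1$, $M(K)\to1^{+}$, and the displayed inequality, applied whenever $|x-y|=2s\le\epsilon(K)$, says exactly that $\varphi=H|S^{1}$ is $(\epsilon(K),M(K))$-quasisymmetric.

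I expect the main obstacle to be justifying the clause that $B^{-1}$ is a well--controlled diffeomorphism near $\varphi(m)$, with the derivative comparison producing a factor $1+O(s)$ rather than merely a bounded factor. This requires the pole $H(q)$ of $B$ to stay a fixed distance from $\varphi(m)$ on $S^{1}$ and the two image arcs to be short relative to that distance; it is precisely this constraint that makes the conclusion scale--local ($|x-y|\le\epsilon(K)$) rather than global and that dictates how the normalising M\"obius maps must be selected. In the application in this paper, $H$ is the boundary correspondence of an isomorphism between cocompact Fuchsian groups, and the needed uniformity is furnished by the bounded geometry of the Markov partitions in Lemma~\ref{boundedgeometry}; in general one would first normalise $H$ at three boundary points. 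Granting this, the remainder is routine Taylor estimation of the two Cayley maps, the only quantitative input being the decay $\lambda(K)\to1$ recorded in~(\ref{lambda}).
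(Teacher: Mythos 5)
Your route is essentially the paper's: transfer to the upper half-plane by M\"obius changes of variable, apply Lemma~\ref{upper} there, and undo the transfer on short arcs at the cost of a M\"obius distortion factor, with $\lambda(K)\to 1^{+}$ supplied by Lemma~\ref{halfvalue} or the bound~(\ref{lambda}). The paper does this with a single map $\Upsilon$ from ${\mathbb R}$ to $S^{1}$ and lumps all corrections into one factor $|\Upsilon'(\xi)|/|\Upsilon'(\eta)|\le e^{5(K-1)}$ valid for $|x-y|\le\epsilon(K)$, arriving at $M(K)=e^{10(K-1)}$; your two-chart version, with the source chart $A$ centred at the arc midpoint so that the pulled-back triple is exactly symmetric, is in fact cleaner on the source side, where the paper implicitly applies Lemma~\ref{upper} to a triple that is only approximately symmetric.

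The step you flag as the main obstacle is a genuine gap in your write-up as it stands, but it is the same gap that sits, unacknowledged, in the paper's own proof rather than a defect peculiar to your route. The bound $1+cs$ with $c=c(K)$ requires the two image arcs at $\varphi(m)$ to be short and uniformly separated from the pole $H(q)$ of $B^{-1}$, and this cannot be extracted from $K$ alone: the M\"obius maps $z\mapsto(z-r)/(1-rz)$ are $1$-quasiconformal, yet as $r\to 1^{-}$ they send symmetric triples of any fixed small diameter to image triples whose ratio tends to $0$, so no single pair $(\epsilon(K),M(K))$ can serve all $K$-quasiconformal $H$ without some normalization. The paper's ``without loss of generality $\Upsilon^{-1}([x,y])$ lies in a fixed compact set'' controls only the source interval and never addresses this image-side distortion. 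Your proposed repairs are the right ones: normalize $H$ at three boundary points (compactness of normalized $K$-quasiconformal maps then gives a modulus of continuity for $\varphi$ depending only on $K$, so the image arcs are uniformly short and stay away from $\varphi(q)$), or invoke the equivariance under cocompact Fuchsian groups present where the lemma is actually used, in Lemma~\ref{DTtoD}. With that proviso made explicit, your argument establishes the lemma to the same extent as, and somewhat more transparently than, the paper's own proof.
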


\begin{proof}
Suppose $\Upsilon$ is a M\"obius transformation mapping ${\mathbb
R}$ to $S^{1}$. Then $\Upsilon^{-1} \circ H\circ \Upsilon$ is a
$K$-quasiconformal homeomorphism of ${\mathbb H}$. From
Lemma~\ref{upper},
$$
\frac{|\Upsilon'(\xi)|}{|\Upsilon'(\eta)|} \lambda (K)^{-1}\leq
\frac{|\varphi(x) -
\varphi(\frac{x+y}{2})|}{|\varphi(\frac{x+y}{2})-\varphi(y)|}\leq
\frac{|\Upsilon'(\xi)|}{|\Upsilon'(\eta)|} \lambda (K)
$$
where $\xi, \eta\in [x,y]$. Without loss of generality, we assume
that $\Upsilon^{-1}([x,y])$ is in a fixed compact set of ${\mathbb
R}$ (otherwise, we use a different $\Upsilon$ such that
$\Upsilon^{-1}([x,y])$ away from $\infty$). Thus we have a number
$\epsilon (K)>0$ such that
$$
\frac{|\Upsilon'(\xi)|}{|\Upsilon'(\eta)|} \leq e^{5(K-1)}
$$
for any $|y-x| \leq \epsilon (K)$. Thus we can take
$$
M(K) =  e^{10(K-1)}\to 1^{+}, \quad \hbox{as}\quad K\to 1^{+}.
$$
\end{proof}

Suppose $[a,b]$ is an interval and $H: [a,b]\to H([a,b])$ is a
homeomorphism. We say $H$ is $M$-quasisymmetric on $[a,b]$ if
$$
M^{-1} \leq \frac{|H(x+t)-H(x)|}{|H(x)-H(x-t)|} \leq M
$$ for any
$x, x+t,x-t\in [a,b]$ and $t>0$. We give a proof of the following
lemma.

\vspace*{10pt}
\begin{lemma}~\label{sd}
There is a bounded function $\zeta (M)>0$ satisfying $\zeta (M)
\to 0$ as $M\to 1^{+}$ such that for any $M$-quasisymmetric
homeomorphism $H$ of $[0,1]$ with $H(0)=0$ and $H(1)=1$,
$$
|H(x)-x|\leq \zeta (M), \quad \forall\; x \in [0,1].
$$
\end{lemma}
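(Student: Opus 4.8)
The plan is to show that an $M$-quasisymmetric self-homeomorphism of $[0,1]$ fixing the endpoints must be uniformly close to the identity, with the closeness controlled by how close $M$ is to $1$. The natural object to track is the \emph{dyadic structure}: for a dyadic rational $x = j/2^n$, the value $H(x)$ is pinned down by the quasisymmetry inequality applied successively to the nested dyadic subdivisions of $[0,1]$. Concretely, quasisymmetry with constant $M$ at the midpoint of an interval $[a,b]$ forces
$$
\frac{1}{1+M} \leq \frac{|H(\tfrac{a+b}{2})-H(a)|}{|H(b)-H(a)|} \leq \frac{M}{1+M},
$$
so each bisection of a subinterval is reproduced under $H$ by a bisection whose ratio lies in $[\tfrac{1}{1+M},\tfrac{M}{1+M}]$ rather than being exactly $\tfrac12$. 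Iterating this over the $n$ levels needed to reach $x=j/2^n$, one gets that $H(x)$ lies in an interval whose endpoints differ from $x$ by an amount that can be bounded purely in terms of $M$.

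First I would set up the induction on dyadic scales. Write $\epsilon(M) = \tfrac{M}{1+M}-\tfrac12 = \tfrac{M-1}{2(M+1)}$, which tends to $0$ as $M\to 1^+$. At level $1$ we have $|H(\tfrac12)-\tfrac12|\le \epsilon(M)$. Suppose at level $n$ that for every dyadic $x=j/2^n$ the point $H(x)$ lies within $\delta_n$ of $x$; then passing to level $n+1$, the midpoint of a level-$n$ dyadic interval $I$ of length $2^{-n}$ is mapped by $H$ into $H(I)$, which has length at most $2^{-n} + 2\delta_n$ (by the inductive hypothesis on the endpoints of $I$), and the quasisymmetry inequality places $H(\text{midpoint of }I)$ within $\epsilon(M)\cdot|H(I)|$ of the midpoint of $H(I)$. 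Combining, $\delta_{n+1}\le \delta_n + \epsilon(M)(2^{-n}+2\delta_n) = (1+2\epsilon(M))\delta_n + \epsilon(M)2^{-n}$. Solving this recursion with $\delta_0 = 0$ (or $\delta_1=\epsilon(M)$) gives a bound $\delta_n \le \zeta(M)$ that is \emph{uniform in $n$}, provided $\epsilon(M)$ is small enough that the geometric growth factor $(1+2\epsilon(M))$ is beaten by the geometric decay $2^{-n}$ of the forcing term — i.e. one needs $1+2\epsilon(M) < 2$, which holds whenever $M < 3$, in particular for all $M$ near $1$. A routine summation then yields $\zeta(M) = O(\epsilon(M)) = O(M-1) \to 0$ as $M\to 1^+$. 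Finally, since the dyadic rationals are dense in $[0,1]$ and $H$ is continuous, the estimate $|H(x)-x|\le \zeta(M)$ extends from dyadic $x$ to all $x\in[0,1]$.

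The main obstacle is making the recursion for $\delta_n$ actually close, i.e. producing a bound that does not blow up as $n\to\infty$. The subtlety is that quasisymmetry at the midpoint only controls $H$ of the midpoint \emph{relative to the image interval $H(I)$}, and $|H(I)|$ is a priori only bounded by $2^{-n}+2\delta_n$, so errors can compound multiplicatively down the dyadic tree. This is exactly why one must restrict to $M$ close to $1$ (equivalently $\epsilon(M)$ small): for $M$ near $1$ the per-level amplification factor $1+2\epsilon(M)$ is close to $1$, the series $\sum_n (1+2\epsilon(M))^n 2^{-n}$ converges, and its sum is comparable to $\epsilon(M)$, giving the desired $\zeta(M)\to 0$. (An alternative, perhaps cleaner, route is to invoke the standard fact that $M$-quasisymmetric maps of an interval have a modulus of continuity $|H(x)-H(y)|\le C(M)|x-y|^{\alpha(M)}$ with $C(M)\to 1$, $\alpha(M)\to 1$ as $M\to 1^+$, together with $H(0)=0$, $H(1)=1$, to bound $|H(x)-x|$ directly; but the dyadic argument above is self-contained and keeps the dependence on $M$ transparent.)
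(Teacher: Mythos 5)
Your overall strategy (pin $H$ down on dyadic points via the midpoint inequality, then use density and continuity) is the same as the paper's, but the recursion you set up does not close, and this is a genuine gap. You bound the image of a level-$n$ dyadic interval additively, $|H(I)|\le 2^{-n}+2\delta_n$, which yields $\delta_{n+1}\le(1+2\epsilon(M))\delta_n+\epsilon(M)2^{-n}$. The best bound this recursion can give is its solution with equality and $\delta_0=0$, namely
$$
\delta_n\;\le\;\frac{\epsilon(M)}{\tfrac12+2\epsilon(M)}\Bigl((1+2\epsilon(M))^{n}-2^{-n}\Bigr),
$$
which tends to infinity with $n$ for every fixed $M>1$, so it is not uniform in $n$. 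The condition $1+2\epsilon(M)<2$ is beside the point: the series that actually arises is $\sum_{k=0}^{n-1}(1+2\epsilon)^{\,n-1-k}2^{-k}$, whose $k=0$ term alone is $\epsilon(1+2\epsilon)^{n-1}\to\infty$; the convergent series $\sum_n\bigl((1+2\epsilon)/2\bigr)^n$ never enters. The culprit is the feedback term $2\epsilon(M)\delta_n$: since $\delta_n$ eventually dwarfs $2^{-n}$, the additive bound on $|H(I)|$ is far too lossy at deep levels, and each level amplifies all previously accumulated error.

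The repair is to control $|H(I)|$ multiplicatively, independently of the deviations, which is exactly what the paper does: quasisymmetry at the midpoint shows each bisection multiplies image lengths by a factor in $[\tfrac1{1+M},\tfrac M{M+1}]$, so by induction every level-$n$ dyadic interval satisfies $(\tfrac1{1+M})^n\le|H(I)|\le(\tfrac M{M+1})^n$. With this two-sided bound the error injected at level $n$ is at most $\chi_n=\max\bigl\{(\tfrac M{M+1})^n-2^{-n},\,2^{-n}-(\tfrac1{M+1})^n\bigr\}$ and the recursion becomes the non-amplifying $\delta_n\le\delta_{n-1}+\chi_n$, with $\sum_n\chi_n\le (M-1)+(1-\tfrac1M)=O(M-1)$, giving $\zeta(M)\to0$. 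Your parenthetical alternative (a H\"older modulus with $C(M)\to1$ and $\alpha(M)\to1$) would suffice, but the standard dyadic argument only gives exponent $\log_2\tfrac{M+1}{M}$ with a fixed constant such as $2$, and a constant not tending to $1$ destroys the conclusion (e.g. $\sup_{x\in[0,1]}(2x^{\alpha}-x)\not\to0$ as $\alpha\to1$); obtaining $C(M)\to1$ is essentially the content of the lemma, so that route is circular unless you prove it.
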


\begin{proof}
Consider points $x_{n}=1/2^{n}$, $n=0, 1, \cdots$. The
$M$-quasisymmetry condition implies that
$$
M^{-1}\leq
\frac{H(\frac{1}{2^{n-1}})-H(\frac{1}{2^{n}})}{H(\frac{1}{2^{n}})-H(0)}\leq
M.
$$
From this and the fact that $H(0)=0$, we get
$$
(1+M^{-1})H(\frac{1}{2^{n}})\leq H(\frac{1}{2^{n-1}}) \leq
(1+M)H(\frac{1}{2^{n}}).
$$
This gives
$$
\frac{1}{1+M} H(\frac{1}{2^{n-1}}) \leq H(\frac{1}{2^{n}}) \leq
\frac{1}{1+M^{-1}} H(\frac{1}{2^{n-1}}).
$$
Using the fact that $H(1)=1$, we further get
$$
\Big(\frac{1}{1+M}\Big)^{n} \leq H(\frac{1}{2^{n}}) \leq
\Big(\frac{1}{1+M^{-1}}\Big)^{n}, \quad \forall\; n\geq 1.
$$
Furthermore, by $M$-quasisymmetry  and induction on $n=1,
2,\cdots$, yield
$$
\Big(\frac{1}{1+M}\Big)^{n} \leq H(\frac{i}{2^{n}}) -
H(\frac{i-1}{2^{n}}) \leq \Big( \frac{1}{1+M^{-1}}\Big)^{n}, \quad
\forall \; n\geq 1, \;\; 1\leq i\leq 2^{n}.
$$

Let
$$
\chi_n=
\max\left\{\left(\frac{M}{M+1}\right)^n-\frac{1}{2^n},\frac{1}{2^n}-\left(\frac{1}{M+1}\right)^n\right\},
\quad n=1,2, \cdots.
$$
Then for $n=1$,
$$
|H(\frac{1}{2}) -\frac{1}{2}| \leq
\chi_{1}=\frac{1}{2}\frac{M-1}{M+1},
$$
and for any $n>1$, we have
$$
\max_{0\leq i\leq 2^{n}} \Big| H(\frac{i}{2^{n}})
-\frac{i}{2^{n}}\Big| \leq \max_{0\leq i\leq 2^{n-1}} \Big|
H(\frac{i}{2^{n-1}}) -\frac{i}{2^{n-1}}\Big| + \chi_{n}
$$
By summing over $k$ for $1 \leq k \leq n,$ we obtain
$$
\max_{0\leq i\leq 2^{n}} \Big| H(\frac{i}{2^{n}})
-\frac{i}{2^{n}}\Big| \leq \delta_{n}=\sum_{k=1}^{n} \chi_{k}.
$$
If we put $\zeta (M) = \sup_{1\leq n<\infty}\{\delta_{n}\},$ by
summing geometric series, we obtain
$$
\zeta(M) = \max_{1\leq n<\infty} \Big\{
M-1+\frac{1}{2^{n}}-M\Big(\frac{M}{1+M}\Big)^{n},
1-\frac{1}{M}+\frac{1}{M}\Big(\frac{1}{M}\Big)^{n}
-\frac{1}{2^{n}}\Big\}.
$$
Clearly,  $\zeta(M)\to 0$ as $M\to 1$, and since the dyadic points
$$
\{ i/2^{n}\;\; |\;\; n=1, 2, \cdots ; 0\leq i\leq 2^{n}\}
$$
are dense in $[0,1]$, we conclude
$$
|H(x)-x| \leq \zeta (M) \quad \forall \; x\in [0,1],
$$
which proves the lemma.
\end{proof}

Concluding from the above four lemmas, we have that

\vspace*{10pt}
\begin{lemma}~\label{DTtoD}
There is a bounded function $\varrho(\xi) >0$ with $\varrho
(\xi)\to 0$ as $\xi\to 0$ such that
$$
d_{max} (\tau, \tau') \leq \varrho (d_{T}(\tau, \tau'))
$$
for any two $\tau, \tau'\in T(X_{0})$.
\end{lemma}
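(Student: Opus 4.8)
The plan is to assemble Lemmas~\ref{MK} and~\ref{sd} with the defining limit formula for the scaling functions. Fix $\tau,\tau'\in T(X_{0})$ and put $\delta=d_{T}(\tau,\tau')$. Choose representatives $(X,h_{X})\in\tau$ and $(Y,h_{Y})\in\tau'$ realizing (by Teichm\"uller's existence theorem for closed surfaces), or for an arbitrarily small $\varepsilon>0$ nearly realizing, the Teichm\"uller distance, so that $h=h_{Y}\circ h_{X}^{-1}: X\to Y$ is $K$-quasiconformal with $\tfrac12\log K=\delta$. Lift $h$ to $\mathbb{D}$; the lift is again $K$-quasiconformal (lifting to the universal cover does not change the maximal dilatation) and its boundary values give the boundary correspondence $\varphi:S^{1}\to S^{1}$ for the induced isomorphism $\phi_{XY}:\Gamma_{X}\to\Gamma_{Y}$. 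As in the proof of Theorem~\ref{Ji1}, $\varphi$ conjugates $f_{X}$ to $f_{Y}$ and satisfies $I_{w_{n}^{*},Y}=\varphi(I_{w_{n}^{*},X})$ for every finite admissible string $w_{n}^{*}$, and by Lemma~\ref{MK} the map $\varphi$ is $(\epsilon(K),M(K))$-quasisymmetric on $S^{1}$.

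Next I would localize. Since $\nu_{n-1,X}\le A\mu^{n-1}\to 0$, there is an $N=N(K)$ so that every interval of $\eta_{n-1,X}$ has length at most $\epsilon(K)$ once $n\ge N$. Fix $w^{*}=\cdots j_{n}\cdots j_{1}j_{0}\in\Sigma_{A}^{*}$ and, for $n\ge N$, set $I=I_{\sigma_{A}^{*}(w_{n}^{*}),X}\in\eta_{n-1,X}$, so $|I|\le\epsilon(K)$ and $I_{w_{n}^{*},X}\subset I$. On such an interval, whenever $u-t,u,u+t\in I$ we have $2t\le|I|\le\epsilon(K)$, so the $(\epsilon(K),M(K))$-quasisymmetry of $\varphi$ gives exactly $M(K)^{-1}\le |\varphi(u+t)-\varphi(u)|/|\varphi(u)-\varphi(u-t)|\le M(K)$. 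Conjugating $\varphi|_{I}$ by the orientation-preserving affine maps carrying $I$ and $\varphi(I)$ respectively onto $[0,1]$ produces $\tilde{\varphi}:[0,1]\to[0,1]$ which is $M(K)$-quasisymmetric in the sense preceding Lemma~\ref{sd} (affine rescalings do not change the relevant ratios) and has $\tilde{\varphi}(0)=0$, $\tilde{\varphi}(1)=1$; hence by Lemma~\ref{sd}, $|\tilde{\varphi}(x)-x|\le\zeta(M(K))$ for all $x\in[0,1]$.

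Now I would read off the estimate. Under the rescaling of $I$ onto $[0,1]$ the subinterval $I_{w_{n}^{*},X}$ becomes $[a_{n},b_{n}]\subset[0,1]$ with $b_{n}-a_{n}=|I_{w_{n}^{*},X}|/|I_{\sigma_{A}^{*}(w_{n}^{*}),X}|=S_{X}(w_{n}^{*})$; applying $\tilde{\varphi}$ and using $\varphi(I_{w_{n}^{*},X})=I_{w_{n}^{*},Y}$, $\varphi(I)=I_{\sigma_{A}^{*}(w_{n}^{*}),Y}$, the image $[\tilde{\varphi}(a_{n}),\tilde{\varphi}(b_{n})]$ has length $S_{Y}(w_{n}^{*})$. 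Therefore
$$
|S_{Y}(w_{n}^{*})-S_{X}(w_{n}^{*})|=|(\tilde{\varphi}(b_{n})-\tilde{\varphi}(a_{n}))-(b_{n}-a_{n})|\le|\tilde{\varphi}(b_{n})-b_{n}|+|\tilde{\varphi}(a_{n})-a_{n}|\le 2\zeta(M(K))
$$
for all $n\ge N$. Letting $n\to\infty$ and using Lemma~\ref{scalingfunction} gives $|S_{Y}(w^{*})-S_{X}(w^{*})|\le 2\zeta(M(K))$; since $w^{*}$ is arbitrary and $S_{\tau}=S_{X}$, $S_{\tau'}=S_{Y}$, we get $d_{max}(\tau,\tau')=||S_{\tau}-S_{\tau'}||\le 2\zeta(M(K))$. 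With $K=e^{2\delta}$, setting $\varrho(\xi)=2\zeta(M(e^{2\xi}))$ finishes the proof: $\varrho$ is bounded because $M$ and $\zeta$ are, $\varrho(\xi)>0$ for $\xi>0$, and $\varrho(\xi)\to 0$ as $\xi\to 0^{+}$ since $M(K)\to 1^{+}$ as $K\to 1^{+}$ and $\zeta(M)\to 0$ as $M\to 1^{+}$. (To avoid invoking existence of the extremal map, run the argument with $K=e^{2(\delta+\varepsilon)}$ and let $\varepsilon\to 0^{+}$, replacing $\varrho$ by $\xi\mapsto\inf_{\varepsilon>0}2\zeta(M(e^{2(\xi+\varepsilon)}))$.)

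The one genuinely delicate point is the passage from the Teichm\"uller distance on the surface --- an infimum of dilatations of quasiconformal maps $X\to Y$ --- to a \emph{quantitative} quasisymmetry bound for the boundary map $\varphi$ on $S^{1}$ that is uniform over all the nested Markov ratios; this is precisely what Lemmas~\ref{MK} and~\ref{sd} were built to supply, so the argument is essentially an assembly. The remaining care is bookkeeping: choosing $n$ large enough that the relevant Markov intervals are shorter than $\epsilon(K)$, and checking that the affine rescalings do not spoil the quasisymmetry constant --- neither of which is serious.
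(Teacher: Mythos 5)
Your proposal is correct and follows essentially the same route as the paper's proof: lift the (near-)extremal $K$-quasiconformal map, use Lemma~\ref{MK} to get $(\epsilon(K),M(K))$-quasisymmetry of the boundary correspondence conjugating $f_{X}$ to $f_{Y}$, rescale the Markov intervals $I_{\sigma_{A}^{*}(w_{n}^{*}),X}$ of length at most $\epsilon(K)$ onto $[0,1]$, apply Lemma~\ref{sd}, and pass to the limit in $n$ to bound $\|S_{X}-S_{Y}\|$ by a function of $K=e^{2d_{T}(\tau,\tau')}$. Your bookkeeping is in fact slightly more careful than the paper's (the factor $2\zeta(M(K))$ from the triangle inequality, and the remark on existence of the extremal map), but these are cosmetic differences, not a different argument.
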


\begin{proof}
Suppose $K=\exp(2d_{T}(\tau,\tau'))\geq 1$. Then we have two marked Riemann
surfaces $(X, h_{X})\in \tau$ and $(Y, h_{Y})\in \tau'$ such that
$$
h_{XY}=h_{Y}\circ h_{X}^{-1}: X={\mathbb D}/\Gamma_{X}\to
Y={\mathbb D}/\Gamma_{Y}
$$
is a $K$-quasisconformal homeomorphism. (We can pick $h_{XY}$ as
the Teichm\"uller map.) Then $h_{XY}$ can be lift to a
$K$-quasiconformal homeomorphism $H$ of ${\mathbb D}$ such that
$H|S^{1}$ is the boundary correspondence for the isomorphism from
$\Gamma_{X}\to \Gamma_{Y}$ induced by $h_{XY}$. We still use $H$
to denote its restriction to $S^{1}$. Then, from Lemma~\ref{sd},
it is $(\epsilon(K), M(K))$-quasisymmetric on $S^{1}$ and the
conjugacy between the transitive expanding Markov maps $f_{X}$ and
$f_{Y}$, that is,
$$
H\circ f_{X}= f_{Y}\circ H.
$$

For any point $w^{*}= \cdots w_{n}^{*}\in \Sigma_{A}^{*}$, we have
that
$$
I_{w^{*}_{n}, X}\in \eta_{n,X} \quad \hbox{and}\quad
I_{\sigma^{*}(w^{*}_{n}), X}\in \eta_{n-1,X}
$$
and
$$
I_{w_{n}^{*}, Y}=H(I_{w^{*}_{n}, X})\in \eta_{n,Y}  \quad
\hbox{and}\quad I_{\sigma^{*}(w^{*}_{n}),
Y}=H(I_{\sigma^{*}(w^{*}_{n}), X})\in \eta_{n-1,Y}.
$$
Note that
$$
I_{w^{*}_{n}, X}\subset I_{\sigma^{*}(w^{*}_{n}), X}\quad
\hbox{and}\quad I_{w_{n}^{*}, Y}\subset I_{\sigma^{*}(w^{*}_{n}),
Y}.
$$

Let $n_{0}>0$ be an integer such that
$$
|I_{\sigma^{*}(w^{*}_{n}), X}|\leq \epsilon (K)
$$
for all $n\geq n_{0}$. Then $H|I_{\sigma^{*}(w^{*}_{n}), X}$ is a
$M(K)$-quasisymmetric homeomorphism.

By considering $[0,1]$ gluing $0$ and $1$ as a model of $S^{1}$,
then by rescaling $I_{\sigma^{*}(w^{*}_{n}), X}$ and
$I_{\sigma^{*}(w^{*}_{n}), Y}$ into the unit interval $[0,1]$ by
linear maps, we can think $H|I_{\sigma^{*}(w^{*}_{n}), X}$ is a
$M(K)$-quasisymmetric homeomorphism of $[0,1]$ and fixes $0$ and
$1$. Then Lemma~\ref{sd} implies that
$$
|S_{Y}(w_{n}^{*}) - S_{X}(w_{n}^{*})| =\Big|
\frac{|H(I_{w^{*}_{n}, X})|}{|H(I_{\sigma^{*}(w^{*}_{n}), X})|}
-\frac{|I_{w^{*}_{n}, X}|}{|I_{\sigma^{*}(w^{*}_{n}), X}|}\Big|
\leq \zeta(M(K)) .
$$
This implies that
$$
|S_{Y}(w^{*}) - S_{X}(w^{*})| \leq \zeta(M(K)).
$$
Therefore,
$$
d_{max} (\tau, \tau') \leq \zeta (M (d_{T}(\tau, \tau')).
$$
We take $\varrho (\xi) =\zeta (M(\xi))$. The bounded function
$\varrho (\xi)\to 0$ as $\xi\to 0^{+}$. We completed the proof.
\end{proof}

\begin{proof}[Proof of Theorem~\ref{finermetric}]
For any $\epsilon>0$, there is a $\delta>0$ such that $\varrho
(\xi)<\epsilon$ for any $0\leq \xi<\delta$. Thus for any
$\tau,\tau'\in T(R)$ with $d_{T}(\tau,\tau')<\delta$, from
Lemma~\ref{DTtoD}, $d_{max}(\tau,\tau') \leq \varrho
(d_{T}(\tau,\tau')) <\epsilon$. Thus
$$
id: (T(R), d_{T}(\cdot, \cdot) )\to (T(R),
d_{max}(\cdot, \cdot))
$$
is uniformly continuous. We have proved the theorem.
\end{proof}

\vspace*{10pt}
\begin{theorem}~\label{semifinermetric}
The identity map
$$
id_{MT}: (T(R), d_{max})\to (T(R),
d_{T})
$$
is continuous.
\end{theorem}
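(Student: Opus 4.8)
The plan is to prove sequential continuity at an arbitrary point $\tau_{0}\in T(R)$: assuming $d_{max}(\tau_{n},\tau_{0})\to 0$, i.e.\ that the scaling functions $S_{\tau_{n}}$ converge uniformly on $\Sigma_{A}^{*}$ to $S_{\tau_{0}}$, I must deduce $d_{T}(\tau_{n},\tau_{0})\to 0$. Uniform closeness of scaling functions does not by itself control the global bi-Lipschitz constant of the conjugating boundary homeomorphism, so the estimate used in the proof of Theorem~\ref{Ji1} cannot be made uniform in the maximum norm. Instead I would use that, for each fixed periodic orbit of $\sigma_{A}^{*}$, only \emph{finitely many} scaling values enter, and that these values record a geodesic length; finitely many geodesic lengths already pin down a point of $T(R)$.

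\emph{Step 1: reading geodesic lengths off a scaling function.} Let $w^{*}$ be a $\sigma_{A}^{*}$-periodic point of period $m$. By the Nielsen construction of \S3, the corresponding periodic orbit $p,f_{X}(p),\dots,f_{X}^{m-1}(p)$ of $f_{X}$, for any $(X,h_{X})\in\tau$, is such that $f_{X}^{m}$ agrees near $p$ with a hyperbolic element $g_{w^{*}}\in\Gamma_{X}$ having $p$ as its repelling fixed point, and $g_{w^{*}}=\phi_{X}(\gamma_{w^{*}})$ for a fixed $\gamma_{w^{*}}\in\Gamma_{0}$ (a word in the generators $\{\gamma_{s}\}$) depending only on the itinerary; thus $\gamma_{w^{*}}$ names one homotopy class on $R$, the same for all $\tau$. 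For $m=1$, say $w^{*}=\overline{j}$, the level-$n$ cylinder $I_{\overline{j}^{(n)}}$ is carried by $f_{X}$ onto its parent $I_{\overline{j}^{(n-1)}}$, so $|I_{\overline{j}^{(n-1)}}|=|f_{X}'(\xi_{n})|\,|I_{\overline{j}^{(n)}}|$ with $\xi_{n}\to p$, and hence
$$
S_{X}(\overline{j})=\lim_{n\to\infty}\frac{|I_{\overline{j}^{(n)}}|}{|I_{\overline{j}^{(n-1)}}|}=\big|f_{X}'(p)\big|^{-1}=e^{-\ell_{X}(\gamma_{\overline{j}})},
$$
where $\ell_{X}(\gamma)$ is the length in $X$ of the closed geodesic freely homotopic to $\gamma$. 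For general $m$, a telescoping of the defining ratios over one period, in which $f_{X}^{m}$ maps each deep cylinder around $p$ onto the next and the distortion estimate~(\ref{distortion1}) makes the error vanish, gives $\prod_{i=0}^{m-1}S_{X}\big((\sigma_{A}^{*})^{i}(w^{*})\big)=|(f_{X}^{m})'(p)|^{-1}=e^{-\ell_{X}(\gamma_{w^{*}})}$. Therefore
$$
\ell_{\tau}(\gamma_{w^{*}})=-\sum_{i=0}^{m-1}\log S_{\tau}\big((\sigma_{A}^{*})^{i}(w^{*})\big)
$$
is a continuous function of the values of $S_{\tau}$ at the $m$ points $(\sigma_{A}^{*})^{i}(w^{*})$, hence continuous in $S_{\tau}$ for the maximum norm.

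\emph{Step 2: finitely many lengths determine the point, and conclusion.} By a classical fact in Teichm\"uller theory (Fricke coordinates), there is a finite collection of homotopy classes on $R$ whose geodesic length functions give a topological embedding of $(T(R),d_{T})$ into some $\mathbb{R}^{N}$; and the conjugacy classes realized by periodic itineraries of the transitive Markov map $f_{X}$ form a family rich enough to contain such a collection, since the corresponding words in the generators $\{\gamma_{s}\}$ generate $\Gamma_{0}$ together with enough of their products to determine a discrete faithful representation up to conjugation. Fix periodic points $w_{1}^{*},\dots,w_{N}^{*}$ of $\sigma_{A}^{*}$ for which
$$
L:\ T(R)\longrightarrow\mathbb{R}^{N},\qquad L(\tau)=\big(\ell_{\tau}(\gamma_{w_{1}^{*}}),\dots,\ell_{\tau}(\gamma_{w_{N}^{*}})\big)
$$
is a topological embedding, each coordinate being continuous for $d_{T}$. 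If $d_{max}(\tau_{n},\tau_{0})\to 0$, then $S_{\tau_{n}}\to S_{\tau_{0}}$ uniformly, so $S_{\tau_{n}}\big((\sigma_{A}^{*})^{i}(w_{j}^{*})\big)\to S_{\tau_{0}}\big((\sigma_{A}^{*})^{i}(w_{j}^{*})\big)$ for all $i,j$; by Step 1 this gives $L(\tau_{n})\to L(\tau_{0})$ in $\mathbb{R}^{N}$, hence $\tau_{n}\to\tau_{0}$ in $T(R)$, i.e.\ $d_{T}(\tau_{n},\tau_{0})\to 0$. Since $\tau_{0}$ was arbitrary, $id_{MT}$ is continuous.

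The main obstacle is the second half of Step 2: one must check that the periodic itineraries of the Nielsen Markov map realize enough homotopy classes for a finite family of the associated length functions to embed $T(R)$. This is exactly what forbids the a priori pathology in which $\tau_{n}$ runs off toward the boundary of $T(R)$ while its scaling function still converges uniformly; equivalently, it is the statement that $d_{max}$-bounded subsets of $T(R)$ are $d_{T}$-precompact (after which one could alternatively conclude by combining with Theorem~\ref{finermetric} and the injectivity of $\iota$ from Theorem~\ref{samesf}). Step 1 itself is a routine, if somewhat intricate, telescoping computation, and the Fricke-coordinate input can be quoted from the literature.
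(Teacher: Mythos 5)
Your Step 1 is sound: for a dual periodic point $w^{*}$ of period $m$, the product $\prod_{i=0}^{m-1}S_{\tau}\bigl((\sigma_{A}^{*})^{i}(w^{*})\bigr)$ equals $|(f_{X}^{m})'(p)|^{-1}=e^{-\ell_{\tau}(\gamma_{w^{*}})}$, since $f_{X}^{m}$ near $p$ is a hyperbolic element of $\Gamma_{X}$ whose repelling multiplier is $e^{\ell}$; this is consistent with the identity $\sum_{k}\log S_{\tau}((\sigma_{A}^{*})^{k}w^{*})=\sum_{k}\phi_{X}(\sigma_{A}^{k}w)$ that the paper itself quotes in \S 8. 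The genuine gap is exactly the one you flag and then do not close: your argument stands or falls on the claim that the periodic itineraries of this particular Nielsen/Bowen--Series-type Markov map realize a finite family of free homotopy classes whose length functions give a topological embedding of $(T(R),d_{T})$ into ${\mathbb R}^{N}$. That is a nontrivial statement about the symbolic coding (which conjugacy classes of $\Gamma_{0}$ occur as admissible periodic words, whether the specific simple closed curves of a Fricke/$9g-9$ system --- or at least a trace-coordinate-determining set of words --- are among them, and why the resulting finite length map is a homeomorphism onto its image rather than merely injective and continuous). Asserting that the words ``generate $\Gamma_{0}$ together with enough of their products'' is not a proof, and nothing in the paper's \S 3--\S 4 supplies it; so as written the argument is incomplete at its crucial step.

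For contrast, the paper avoids the length-spectrum route entirely and argues by contradiction using compactness. If $\|S_{m}-S\|\to 0$ but $d_{T}(\tau_{m},\tau)\geq\epsilon$, the uniform convergence gives a uniform lower bound $S_{m}\geq a>0$, hence uniformly bounded geometry of the nested partitions $\{\eta_{m,n}\}$; by the method of~\cite{Jiang4} this bounds the quasisymmetric dilatations of the boundary correspondences $H_{m}$ uniformly, and the Douady--Earle extension~\cite{DouadyEarle} then bounds the quasiconformal dilatations $K_{m}$ by a single $K$, so $\{\tau_{m}\}$ lies in a compact $d_{T}$-ball about $\tau$. A subsequential $d_{T}$-limit $\tilde\tau$ is then shown to satisfy $S_{\tilde\tau}=S$ by an exchange of the limits in $m$ and $n$ (legitimate because bounded geometry makes $S_{m}(w_{n}^{*})\to S_{m}(w^{*})$ uniform in $m$), and Theorem~\ref{samesf} forces $\tilde\tau=\tau$, contradicting $d_{T}(\tau_{m},\tau)\geq\epsilon$. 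Note that this is essentially the precompactness statement you mention parenthetically as an alternative; the paper proves that precompactness directly from the scaling-function lower bound, which is the ingredient your proposal would still need (or the periodic-orbit realization theorem) to become complete.
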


\begin{proof} Suppose $id_{MT}: (T(R), d_{max})\to (T(R), d_{T})$ is not continuous.
That is, we have a real number $\epsilon>0$
and a point $S=\iota (\tau)$ and a sequence of points $\{ S_{m}=\iota (\tau_{m}) \}_{m=1}^{\infty}$
in the Teichm\"uller space $T(R)$ such that
$$
d_{\max} (\tau_{m}, \tau)=\|S_{m}-S\| \to 0\quad \hbox{as}\quad m\to \infty
$$
but
$$
d_{T} (\tau_{m}, \tau)\geq \epsilon, \quad \forall\; m.
$$
Let $(X, h_{X})\in \tau$ be a fixed representation
and $(X_{m}, h_{X_{m}})\in \tau_{m}$ for each $m$ be
a representation such that
$$
h_{m}=h_{X_{m}}\circ h_{X}^{-1}: X={\mathbb D}/\Gamma_{X}\to X_{m} ={\mathbb D}/\Gamma_{m}
$$
is a $K_{m}=\exp (2d_{T} (\tau_{m}, \tau))$-quasiconformal homeomorphism.
(We can pick $h_{m}$ as the Teichm\"uller map.)
Then $h_{m}$ can be lift to a $K_{m}$-quasiconformal map $H_{m}$ of ${\mathbb D}$
such that $H_{m}|S^{1}$ is the boundary correspondence for the isomorphism from
$\Gamma_{m}\to \Gamma$ induced by $h_{m}$. We still use $H_{m}$ to denote this
boundary correspondence. Let $f_{X_{m}}$ and $f_{X}$ are the corresponding Markov maps.
Let $\{\eta_{n}\}_{n=0}^{\infty}$ and $\{\eta_{m,n}\}_{n=0}^{\infty}$ be the corresponding
sequences of nested Markov partitions. Since $\|S_{m}-S\| \to 0$ as $m\to \infty$,
we have a constant $a=a(S)>0$ such that $S_{m}(w^{*}) \geq a$ for sufficient large $m$
and all $w^{*}\in \Sigma_{A}^{*}$. Let us assume this true for all $m$.
Since $\Sigma_{A}^{*}$ is a compact set, we have that there is another
constant $b=b(a)>0$ such that $S_{m}(w_{n}^{*}) \geq b$
(pre-scaling functions in Lemma~\ref{scalingfunction})
for all $m$ and all $n$. This says that the collection of the sequences
$\{\eta_{m,n}\}_{n=0}^{\infty}$ of nested Markov partitions
has uniformly bounded geometry. From a method in~\cite{Jiang4}, which gives a calculation of quasisymmetric dilatation
from bounded geometry, we have a constant $M>0$ such that the quasisymmetric dilatations of all $H_{m}$ are less than or equal to $M$. From~\cite{DouadyEarle}, we know the quasiconformal dilatation of the Douady-Earle extension of $H_{m}$ to ${\mathbb D}$ is controlled by the quasisymmetric dilatation of $H_{m}$. Thus we have a constant $K$ such that all $K_{m}$ is less than or equal to $K$.
This says that the sequence $\{\tau_{m}\}_{m=1}^{\infty}$ is contained in the closed ball
$$
B_{K} (S) =\{\eta\in T(R) \;|\; d_{T} (\eta, \tau) \leq \frac{1}{2} \log K\}
$$
which is a compact set. So we have a convergent subsequence. Let us assume that $\{\tau_{m}\}_{m=1}^{\infty}$
itself is convergent and converges to $\tilde{\tau}=[(Y, h_{Y})]$. Let
$$
h_{XY}= h_{X}\circ h_{Y}^{-1}: X={\mathbb D}/\Gamma_{X}\to Y ={\mathbb D}/\Gamma_{Y}
$$
be a $\tilde{K}=\exp(2d_{T} (\tau,\tilde{\tau}))$-quasiconformal homeomorphism. From our assumption,
we know that $\tilde{K}>1$. Let $H_{XY}$ be the corresponding boundary correspondence. Then $H_{m}$ converges to $H_{XY}$
on $S^{1}$ modulo M\"obius transformations as $m\to \infty$. Let us just assume that $H_{m}$
converges to $H_{XY}$ on $S^{1}$ as $m\to \infty$.

Let $f_{Y}$ be the corresponding Markov map and let $\{\eta_{n,Y}\}_{n=0}^{\infty}$ be the sequence
of nested Markov partitions. For any $w_{n}^{*}$, let $I_{w_{n}^{*}, X_{m}}\in \eta_{m,n}$
and $I_{w_{m}^{*}, Y}\in \eta_{n,Y}$. We have that $|I_{w_{n}^{*}, X_{m}}|\to |I_{w_{m}^{*}, Y}|$
as $m\to \infty$ for each fixed $n$ and $w_{n}^{*}$.

Since the sequences $\{\eta_{m,n}\}_{n=0}^{\infty}$ of nested Markov partitions have
uniformly bounded geometry, this again says that there are constants $C=C(S)>0$ and $0<\mu=\mu (S)<1$
such that $\nu_{n,m}\leq C\mu^{n}$ for all $n$ and $m$,
where
$$
\nu_{n,m} =\max_{I\in \eta_{n,m}} |I|.
$$
This implies that $S_{m}(w_{n}^{*}) \to S_{m}(w^{*})$ and $S_{Y}(w^{*}_{n}) \to S_{Y}(w^{*})$
as $n\to \infty$ uniformly on $m\geq 1$ and $w^{*}\in \Sigma_{A}^{*}$. Thus we can change double
limits for each $w^{*}\in \Sigma_{A}^{*}$,
$$
S (w^{*}) =\lim_{m\to \infty} S_{m}(w^{*}) = \lim_{m\to \infty} \lim_{n\to \infty} S_{m}(w_{n}^{*})
$$
$$
=\lim_{n\to \infty}\lim_{m\to \infty} S_{m}(w_{n}^{*}) =\lim_{n\to \infty} S_{Y}(w_{n}^{*}) = S_{Y}(w^{*}).
$$
From Theorem~\ref{samesf}, this implies that $\tilde{\tau}=\tau$, therefore, $d_{T}(\tilde{\tau},\tau)=0$.
This is a contradiction. The contradiction says that
$$
id_{MT}: (T(R), d_{max}(\cdot, \cdot) )\to (T(R),
d_{T}(\cdot, \cdot))
$$
is continuous at each point $S$. We have completed the proof.
\end{proof}

However, the map in the last theorem is general not uniformly continuous (actually all constants
in the proof depend on $S$). This can be examined by the union of graphs of $S\in T(R)$ which is an open section
in the open unit cube $\prod_{0}^{\infty}(0,1)$ and the maximum norm on this open unit cube is incomplete.
However, from Theorems~\ref{finermetric} and~\ref{semifinermetric}, we have that

\begin{corollary}
The topology on $T(R)$ induced from the maximum metric $d_{max}$ is the same as the topology on $T(R)$ induced from the usual Teichm\"uller metric $d_{T}$.
\end{corollary}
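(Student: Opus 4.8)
The plan is to deduce the Corollary as a purely formal consequence of Theorems~\ref{finermetric} and~\ref{semifinermetric}, with no further analytic work required. Recall the elementary fact that two metrics $d$ and $d'$ on a common underlying set $Y$ induce the same topology if and only if both identity maps $(Y,d)\to(Y,d')$ and $(Y,d')\to(Y,d)$ are continuous; equivalently, a subset of $Y$ is $d$-open exactly when it is $d'$-open. So it suffices to check both directions of continuity for $d_T$ and $d_{max}$ on $T(R)$.

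First I would invoke Theorem~\ref{finermetric}: the identity $id_{TM}\colon (T(R),d_T)\to(T(R),d_{max})$ is uniformly continuous, hence continuous. This yields one inclusion, namely that every $d_{max}$-open subset of $T(R)$ is $d_T$-open. Next I would invoke Theorem~\ref{semifinermetric}: the identity $id_{MT}\colon (T(R),d_{max})\to(T(R),d_T)$ is continuous, which yields the reverse inclusion, that every $d_T$-open subset is $d_{max}$-open. Combining the two inclusions gives that the $d_T$-topology and the $d_{max}$-topology on $T(R)$ coincide, which is precisely the assertion of the Corollary.

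There is essentially no obstacle to overcome at this stage: all of the hard analysis — the quasisymmetry estimates of Lemmas~\ref{MK} and~\ref{sd}, the bounded-geometry and distortion bounds, and the compactness argument via the Douady--Earle extension in the proof of Theorem~\ref{semifinermetric} — has already been expended in proving the two theorems, and the Corollary is merely their soft topological packaging. The only point worth flagging, as the remark immediately preceding the Corollary explains, is that this cannot be strengthened to an equivalence of uniform structures or to a completeness statement, since $id_{MT}$ fails to be uniformly continuous and the maximum norm on the open cube $\prod_{0}^{\infty}(0,1)$ is not complete; but for comparing the induced \emph{topologies}, continuity of both identity maps is all that is needed.
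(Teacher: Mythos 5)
Your proposal is correct and is exactly how the paper obtains the Corollary: it is stated as an immediate consequence of Theorems~\ref{finermetric} and~\ref{semifinermetric}, since continuity of both identity maps $id_{TM}$ and $id_{MT}$ means every $d_{max}$-open set is $d_T$-open and conversely. Your added remark about uniform continuity failing for $id_{MT}$ matches the paper's own comment preceding the Corollary, so nothing is missing.
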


\section{\bf Added Remark: pressure metric and WP metric}

It is interesting to compare our function model and McMullen's thermodynamical
embedding. More interestingly, from McMullen's calculation in~\cite{McMullen}, we have that the pressure metric
for the tangent vector $d(\log S_{t})/dt|_{t=0}$ of a smooth path $\iota (\tau_{t})=S_{t}$
through $S_{0}$ is a constant times the Weil-Petersson metric of $d\tau_{t}/dt|_{t=0}$.

Consider the subshift of finite type $(\Sigma_{A}, \sigma_{A})$ in \S4 associated to all Marked Riemann surfaces $(X, h_{X})$ by $R$.
Let $C^{H}=C^{H}(\Sigma_{A})$ be the space of all H\"older continuous functions on $\Sigma_{A}$. Two functions
$\phi, \psi\in C^{H}(\Sigma_{A})$ are said to be cohomologously equivalent, denoted as $\phi\sim_{co} \psi$
if there is a continuous function $u$ on $\Sigma_{A}$ such that
$$
\phi -\psi = u\circ \sigma_A -u.
$$
It is an equivalence relation. We say $\phi$ is a co-boundary if $\phi\sim_{co} 0$. We use
$$
{\mathcal C}{\mathcal C}^{H}= C^{H}(\Sigma_{A})/ \sim_{co}
$$
to denote the space of all cohomologous equivalence classes. For each $\theta\in {\mathcal C}{\mathcal C}^{H}$,
there is an important thermodynamical quantity called the pressure $P(\theta)=P(\phi)$ for any $\phi\in\theta$ associated to it.
It is a smooth concave function on ${\mathcal C}{\mathcal C}^{H}$. Let
$$
{\mathcal C}{\mathcal C}^{H}_{0} =\{ \theta \in {\mathcal C}{\mathcal C}^{H}\;|\; P(\theta)=0\}
$$
be the subspace of all equivalence classes with zero pressure. In~\cite{McMullen}, McMullen embedded the Teichm\"uller space $T(R)$ into
${\mathcal C}{\mathcal C}^{H}_{0}$ through cohomologous equivalence classes $\theta=[\phi_{X}]$
where
$$
\phi_{X} =-\log f_{X}'\circ \pi_{X}
$$
and where $f_{X}$ are Markov maps associated to all marked Riemann surfaces $(X, h_{x})$ by $R$ in \S3.
Just like we did in the circle expanding mappings case in~\cite{Jiang5} (also see~\cite{Jiang6}), the scaling function $S_{\tau}$
on the dual symbolic space $\Sigma^{*}_{A}$ can be thought as a single function representation for the cohomologous equivalence
class $\theta=[\phi_{X}]$ for any marked Riemann surface $(X, h_{X})\in \tau$ but it is in the dual point of view.
Therefore, our scaling function model ${\mathcal F}$ can be thought as a dual version of McMullen's thermodynamical embedding.
However, our model gives a single function representation for each coholomogous equivalence class as well as
for each Teichm\"uller equivalence class.

For every $\theta \in {\mathcal C}{\mathcal C}^{H}_{0}$,
there is a unique Gibbs measure $m_{\theta}$ for the system $(\Sigma_{A}, \sigma_{A}, \phi)$ where $\phi$
is any function in $\theta$ (see, for example,~\cite{Jiang7} and other references in it). For every $[\psi]\in {\mathcal C}{\mathcal C}^{H}_{0}$ with zero mean,
that is, $\int_{\Sigma_{A}} \psi dm_{\phi}=0$, the variance is given by
$$
Var ([\psi], m_{\phi}) = \lim_{n\to \infty} \frac{1}{n} \int_{\Sigma_{A}} \Big|\sum_{k=0}^{n-1} \psi\circ \sigma_{A}^{k}(w)\Big|^{2} dm_{\phi}.
$$
Then by convexity, the second derivative
$$
D^{2} P([\psi]) = Var ([\psi], m_{\theta}).
$$
The pressure metric of $[\psi]$, given by
$$
||[\psi]||^{2}_{P} =\frac{Var ([\psi], m_{\theta})}{-\int_{\Sigma_{A}} \phi dm_{\theta}},
$$
is nondegenerate. Suppose $\tau_{t}$ is a smooth path in $T(R)$ through $\tau_{0}$. The tangent vector $\dot{\tau_{0}}= d\tau_{t}/dt|_{t=0}$
can be represented uniquely by a harmonic Beltrami differential $\mu= \rho^{-2} \overline{\phi}$ where $\rho$ is the
hyperbolic metric and $\phi$ is a holomorphic quadratic differential. The Weil-Petersson metric on the
tangent space $T_{\tau_{0}} T (R)$ is given by
$$
|| \dot{\tau_{0}}||^{2}_{WP} = ||\mu||^{2}_{WP} =\int \rho^{2} |\mu|^{2} =\int \rho^{-2} |\phi|^{2}.
$$

Suppose $\tau_{t}$ is a smooth path in $T(R)$ through $\tau_{0}$.
There is a unique family of homeomorphisms $H_{t}$ of $S^{1}$ such that it is the family of boundary correspondences
from $\Gamma_{t}$ to $\Gamma_{0}$ where $X_{t}={\mathbb D}/\Gamma_{t}\in \tau_{t}$. Let $\Phi_{t}$ be the family
of quasiconformal homeomorphisms from Bers' embedding in \S6. Let $\Lambda_{t}$ be the image of $S^{1}$
under $\Phi_{t}$. Then $\Lambda_{t}$ is a quasicircle and is the limit set of the quasi-Fuchsian group $\tilde{\Gamma}_{t}$
obtained by gluing the unit disk and the outer of unit disk by $H_{t}$. Let $a(t)=HD(\Lambda_{t})$ be the Hausdorff dimension
of $\Lambda_{t}$. Then $a(t)$ has the minimum value $1$ at $t=0$ since $\Lambda_{0}=S^{1}$. Let $m_{t}=H_{t*}Leb$ be the
pushforward measure of Lebesgue measure on $S^{1}$ by $H_{t}$. Then from Theorem~\ref{Tukia}, $m_{t}$ is totally singular
with respect to Lebesgue measure. Let $b(t)=HD(m_{t})$ be the Hausdorff dimension of the measure $m_{t}$, that is,
$$
b(t) = \inf \{ HD (E)\;|\; m_{t}(E)=1\}.
$$
Then $b(t)$ has the maximum value $1$ at $t=0$.
Let $\theta_{t}$ be the corresponding cohomologous equivalence classes to $\tau_{t}$ from McMullen's thermodynamical embedding.
Using a key equality in thermodynamical formalism,
$$
P([\phi]+t[\psi]) = P([\phi]) +\frac{t^{2}}{2} Var ([\psi], m_{[\phi]}) +O(t^{3}),
$$
where $m_{[\phi]}$ is the Gibbs measure for the system $(\Sigma_{A}, \sigma_{A}, \phi)$ and $[\psi]$ has zero mean and
$Var ([\psi], m_{[\phi]})$ is the variance, McMullen proved that
$$
\frac{1}{4} || \dot{\theta_{0}}||^{2}_{P} = \frac{d^{2} a(t)}{dt^{2}}|_{t=0} =- \frac{1}{4}\frac{d^{2} b(t)}{dt^{2}}|_{t=0} = \frac{1}{3} \frac{||\dot{\tau_{0}}||^{2}_{WP}}{area(\tau_{0})}.
$$

Now let us consider the dual symbolic dynamical system $(\Sigma_{A}^{*}, \sigma_{A}^{*})$ and the space of all functions $\log S_{\tau}$ for $\tau\in T(R)$ and $\iota (\tau) =S_{\tau}$. First we have that the pressure $P(\log S_{\tau})=0$ for every $S_{\tau}$. Let $S_{t}=\iota (\tau_{t})$ be the corresponding smooth path through $S_{0}$ in our function model. Let $m^{*}_{0}=m^{*}_{\log S_{0}}$ be the Gibbs measure for the system $(\Sigma_{A}^{*}, \sigma_{A}^{*}, \log S_{0})$. From the fact that $P(\log S_{t}) =0$, we have that
$$
\frac{dP(\log S_{t})}{dt}\Big|_{t=0}=\int_{\Sigma_{A}^{*}} \frac{d(\log S_{t})}{dt}\Big|_{t=0} dm^{*}_{0} =0.
$$
Thus the vector $d\log S_{t}/dt|_{t=0}$ has zero mean. The variance is then can be calculated as
$$
Var \Big( \frac{d(\log S_{t})}{dt}\Big|_{t=0}, m^{*}_{0}\Big) = \lim_{n\to \infty} \frac{1}{n} \int_{\Sigma_{A}^{*}} \Big|\sum_{k=0}^{n-1} \frac{d(\log S_{t})}{dt}\Big|_{t=0} \circ (\sigma_{A}^{*})^{k}(w^{*})\Big|^{2} dm^{*}_{0}.
$$
The pressure metric for $d(\log S_{t})/dt|_{t=0}$ can be then defined and is given by
$$
\Big\|\frac{d(\log S_{t})}{dt}\Big|_{t=0}\Big\|^{2}_{P} =\frac{Var \Big( \frac{d(\log S_{t})}{dt}\Big|_{t=0}, m^{*}_{0}\Big)}{-\int_{\Sigma^{*}_{A}} \log S_{0} dm^{*}_{0}}.
$$

From~\cite[page 76-77]{Jiang6}, for each periodic point $w^{*}= (j_{n-1}\cdots j_{0})^{\infty}$ of $\sigma_{A}^{*}$, we have a periodic point $w=(i_{0}\cdots i_{n-1})^{\infty}$ of $\sigma_{A}$. This correspondence of periodic points is bijective. Moreover, from~\cite[Proposition 3.3]{Jiang6},
$$
\sum_{k=0}^{n-1} \log S_{\tau} ((\sigma_{A}^{*})^{k} (w^{*})) =\sum_{k=0}^{n-1} \phi_{X} (\sigma_{A}^{k} (w)).
$$
for any $(X, h_{X})\in \tau\in T(R)$. Since the pressures $P([\phi_{X}])$ and $P(\log S_{\tau})$ only depend
on summations of values over periodic cycles, so they are equal, that is,
$$
P(\log S_{\tau})= P([\phi_{X}]).
$$
Just like we did in~\cite{Jiang5}, there is a one-to-one correspondence between Gibbs measures $m_{[\phi_{X}]}$ for systems
$(\Sigma_{A}, \sigma_{A}, \phi_{X})$ and Gibbs measure $m_{\log S_{\tau}}^{*}$ for systems
$(\Sigma_{A}, \sigma_{A}, \log S_{\tau})$. Thus for the smooth curve $\{\iota (\tau_{t}) = S_{t}\}$, the variance
$$
 Var\Big(\frac{d(\log S_{t})}{dt}\Big|_{t=0}, m^{*}_{\log S_{0}}\Big)=Var(\dot{\theta_{0}}, m_{\theta_{0}}).
$$
Moreover,
$$
\int_{\Sigma^{*}_{A}} \log S_{0} dm^{*}_{0}=\int_{\Sigma_{A}} \phi_{0} dm_{\theta_{0}}
$$
for any $\phi_{0}\in \theta_{0}$.
Thus we have that the pressure metric
$$
\Big\| \frac{d(\log S_{t})}{dt}\Big|_{t=0}\Big\|_{P}^{2} = \frac{4}{3} \frac{||\dot{\tau_{0}}||^{2}_{WP}}{area(\tau_{0})}.
$$

\vspace*{20pt}
\bibliographystyle{amsalpha}

\end{document}